\newcommand{\tcr}[1]{\textcolor{black}{#1}}
\newcommand{\VW}[1]{\textcolor{black}{#1}}
\newcommand{\FA}[1]{\textcolor{black}{#1}}
\newcommand{\svw}[1]{\textcolor{black}{#1}}
\theoremstyle{definition}
\newtheorem{theorem}{Theorem}[section]
\newtheorem*{theorem*}{Theorem}
\newtheorem{proposition}[theorem]{Proposition}
\newtheorem{lemma}[theorem]{Lemma}
\newtheorem{definition}[theorem]{Definition}
\newtheorem{example}[theorem]{Example}
\theoremstyle{remark}
\newtheorem{remark}[theorem]{Remark}
\DeclareMathOperator{\dotcup}{{\dot\cup}}
\numberwithin{equation}{section}
\newcommand{\Sym}{\ensuremath{\operatorname{Sym}}}
\newcommand{\QSym}{\ensuremath{\operatorname{QSym}}}
\newcommand{\p}{\mathcal p} 
\newcommand{\NSym}{\ensuremath{\operatorname{NSym}}}
\newcommand{\set}{\operatorname{set}}
\newcommand{\comp}{\operatorname{comp}}
\newcommand{\ind}{\operatorname{ind}}
\newlength\cellsize \setlength\cellsize{15\unitlength}
\newcommand\cellify[1]{\def\thearg{#1}\def\nothing{}%
\ifx\thearg\nothing
\vrule width0pt height\cellsize depth0pt\else
\hbox to 0pt{\usebox2\hss}\fi%
\vbox to 15\unitlength{
\vss
\hbox to 15\unitlength{\hss$#1$\hss}
\vss}}
\newcommand\tableau[1]{\vtop{\let\\=\cr
\setlength\baselineskip{-16000pt}
\setlength\lineskiplimit{16000pt}
\setlength\lineskip{0pt}
\halign{&\cellify{##}\cr#1\crcr}}}
\newcommand\expath[1]{%
\hbox to 0pt{\usebox3\hss}%
\vbox to 15\unitlength{
\vss
\hbox to 15\unitlength{\hss$#1$\hss}
\vss}}
\newcommand\bas[1]{\omit \vbox to \cellsize{ \vss \hbox to \cellsize{\hss$#1$\hss} \vss}}
\begin{document}

\title[$P$-partition power sums]{$P$-partition power sums}

\author{Farid Aliniaeifard}
\address{
 Department of Mathematics,
 University of British Columbia,
 Vancouver BC V6T 1Z2, Canada}
\email{farid@math.ubc.ca}

\author{Victor Wang}
\address{
 Department of Mathematics,
 University of British Columbia,
 Vancouver BC V6T 1Z2, Canada}
\email{vyzwang@student.ubc.ca}

\author{Stephanie van Willigenburg}
\address{
 Department of Mathematics,
 University of British Columbia,
 Vancouver BC V6T 1Z2, Canada}
\email{steph@math.ubc.ca}

\thanks{All authors were supported  in part by the National Sciences and Engineering Research Council of Canada.}
\subjclass[2010]{05E05, 06A07, 06A11, 16T30}
\keywords{$P$-partition, power sum, quasisymmetric function, weighted poset}

\begin{abstract} 
\tcr{We develop the theory of weighted $P$-partitions, which generalises the theory of $P$-partitions from labelled posets to weighted labelled posets. We define the related generating functions in the natural way and compute their product, coproduct and other properties. As an application we introduce
 the basis of combinatorial power sums for the Hopf algebra of quasisymmetric functions and the reverse basis, both of which refine the power sum symmetric functions. These bases} share many properties with the type 1 and type 2 quasisymmetric power sums introduced by Ballantine, Daugherty, Hicks, Mason and Niese, and moreover expand into the monomial basis of quasisymmetric functions with nonnegative integer coefficients. We prove formulas for products, coproducts and classical quasisymmetric involutions via the combinatorics of $P$-partitions, and give combinatorial interpretations for the coefficients when expanded into the monomial and fundamental bases.
\end{abstract}

\maketitle
\tableofcontents

\section{Introduction}\label{sec:intro}
The theory of quasisymmetric functions was first introduced in the thesis of Stanley \cite{StanP}, before being developed in the seminal paper of Gessel \cite{Gessel} using the theory of $P$-partitions. Quasisymmetric functions play an important role in algebraic combinatorics, having connections to other areas such as discrete geometry through posets \cite{Ehrenborg, Gessel, Stembridge}, and to probability theory through riffle shuffles \cite{Stanshuf}. They also form a Hopf algebra $\QSym$ \cite{Ehrenborg, Gessel, Malvenuto-Reutenauer}, which is the terminal object in the category of combinatorial Hopf algebras \cite{ABS} and generalises the Hopf algebra of symmetric functions $\Sym$ \cite{Gessel}. This connection to symmetric functions is often used to gain new insight into questions about symmetric functions, for example \cite{AlexSulz, BTvW, SW}.

Quasisymmetric analogues of classical symmetric function bases facilitate the translation between the language of symmetric functions and the language of quasisymmetric functions, and have been discovered for monomial symmetric functions \cite{Gessel}, Schur functions \cite{BBSSZ, HLMvW} and, most recently, for power sum symmetric functions \cite{BDHMN}. Power sum symmetric functions  are of especial interest because they encode the class values of the characters of the symmetric group under the Frobenius characteristic map, for example \cite{EC2}. 
In this paper we introduce a new quasisymmetric power sum basis defined combinatorially in terms of $P$-partitions, returning to the foundations of $\QSym$.

We begin by giving the necessary background in Section~\ref{sec:bg}. In Section~\ref{sec:P-part} we introduce the quasisymmetric generating function $K_{(P,\gamma,w)}$ of $P$-partitions associated \FA{with} a weighted labelled poset $(P,\gamma,w)$ in Definition~\ref{def:KPgw} and study its basic properties. Next, in Section~\ref{sec:qsymnsym} we motivate properties of a quasisymmetric power sum basis, and investigate the relationship between power sum quasisymmetric functions and their scaled duals in Proposition~\ref{prop:dual}. In Section~\ref{sec:power}, we define the bases of combinatorial power sums $\{\p_\alpha\}_{\alpha\vDash n\ge 0}$ and reverse combinatorial power sums $\{\p^r_\alpha\}_{\alpha\vDash n\ge 0}$ in Definitions~\ref{def:cpow} and \ref{def:rcpow}. We prove, using the combinatorics of $P$-partitions developed in Section~\ref{sec:P-part}, rules for products, coproducts and the quasisymmetric involutions $\psi$, $\rho$ and $\omega$ in Theorems~\ref{the:pprod}, \ref{the:pcop} and \ref{the:pinv}. Finally, in Theorems~\ref{the:ptomqsym} and \ref{the:ptofqsym}, we give combinatorial interpretations in terms of enumerating certain matrices for the coefficients of $\p_\alpha$ in the monomial and fundamental bases.

\section{Preliminaries}\label{sec:bg}
{A \textit{partially ordered set}, or simply \textit{poset}, is a set $P$ equipped with a binary relation $\le$ that is reflexive, antisymmetric and transitive. We also write $p< q$ for $p,q\in P$ if and only if $p\le q$ and $p\neq q$.} {A \textit{lower set} of a poset $P$ is a subset $I\subseteq P$ such that $x\in I$ and $y\le x$ implies $y\in I$.} {A poset in which every pair of elements $p,q$ satisfies $p\le q$ or $p\ge q$ is a \textit{chain}. A \textit{linear extension} $s$ of a poset $P$ is a chain on the same underlying set of $P$ such that $p\le q$ in $s$ whenever $p\le q$ in $P$. The \textit{dual poset} $P^*$ of a poset $P$ is the poset with the same underlying set as $P$ such that $p\le q$ in $P^*$ if and only if $p\ge q$ in $P$.} Given two posets $P,Q$, we write $P\dotcup Q$ to denote the poset on their disjoint union with comparisons inherited from $P$ and $Q$. 

We next introduce the theory of $P$-partitions, \tcr{\cite{StanP}, although we use order-preserving, rather than order-reversing, maps.}
\begin{definition}
Let $P$ be a finite poset. A \textit{labelling} of $P$ is an injective map $\gamma$ from $P$ to a chain. We call the pair $(P,\gamma)$ a \textit{labelled poset}.
\end{definition}

If $\gamma:P\to s$ is a labelling of $P$, where $P$ is a finite poset and $s$ is a chain, the \textit{dual labelling} is $\gamma^*:P\to s^*$, defined by $\gamma^*(p)=\gamma(p)$ for all $p\in P$.

\begin{definition}\label{def:P-part}
Let $(P,\gamma)$ be a labelled poset. A \tcr{\emph{$(P,\gamma)$-partition}} is a map \tcr{$f:P\to\mathbb{N} = \{1,2, \ldots\}$} satisfying, for all $p<q$ in $P$,
\begin{enumerate}
    \item $f(p)\le f(q)$, that is, $f$ is order-preserving,
    \item $f(p)=f(q)$ implies $\gamma(p)<\gamma(q)$.
\end{enumerate}
We denote by $\mathscr O (P,\gamma)$ the set of all $(P,\gamma)$-partitions.
\end{definition}

The following lemma describes the set of $P$-partitions associated \FA{with} a labelled poset in terms of those of its linear extensions.

\begin{lemma}[Fundamental lemma of $P$-partitions]\tcr{{\cite[Theorem 6.2]{StanP}}}\label{lem:P-part}
Let $(P,\gamma)$ \tcr{be} a labelled poset. Then
$$\mathscr O(P,\gamma)=\dot\bigcup \,\mathscr O(s,\gamma),$$
where the disjoint union is over all linear extensions $s$ of $P$.
\end{lemma}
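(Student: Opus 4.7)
The plan is to prove the equality $\mathscr O(P,\gamma)=\dot\bigcup_s \mathscr O(s,\gamma)$ by establishing both containments and the disjointness of the union. The key idea is that, given any $(P,\gamma)$-partition $f$, the labelling $\gamma$ (being injective into a chain) lets us break ties in the values of $f$ to single out one canonical linear extension of $P$, which will be the unique one to which $f$ belongs.

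For the containment $\supseteq$, I would take any linear extension $s$ of $P$ and any $f\in\mathscr O(s,\gamma)$, and verify the two defining conditions of Definition~\ref{def:P-part} relative to $P$. Since $s$ refines $P$, any strict inequality $p<q$ in $P$ also holds in $s$, so $f(p)\le f(q)$ and equality forces $\gamma(p)<\gamma(q)$ immediately from the corresponding conditions in $s$. For the reverse containment, given $f\in\mathscr O(P,\gamma)$, I would define a binary relation $<_{s_f}$ on the underlying set of $P$ by declaring $p<_{s_f} q$ if and only if either $f(p)<f(q)$, or $f(p)=f(q)$ and $\gamma(p)<\gamma(q)$. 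Injectivity of $\gamma$ and the fact that its codomain is a chain ensure that exactly one of $p<_{s_f}q$, $p=q$, $p>_{s_f}q$ holds, so $s_f$ is a total order, and the order-preserving condition on $f$ together with the tie-breaking condition on $\gamma$ shows that $p<q$ in $P$ implies $p<_{s_f} q$, so $s_f$ is a linear extension of $P$. It is then immediate from the very definition of $<_{s_f}$ that $f\in\mathscr O(s_f,\gamma)$.

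For disjointness, I would argue that $s_f$ is in fact the \emph{only} linear extension $s$ of $P$ with $f\in\mathscr O(s,\gamma)$: if $p\neq q$ lie in $P$, the chain $s$ must place them in some order, and the two axioms of an $(s,\gamma)$-partition then force that order to coincide with $<_{s_f}$. Hence each $f$ lies in exactly one summand of the right-hand side. The step that requires the most care is verifying that the candidate $s_f$ is simultaneously a total order \emph{and} a refinement of $P$; here both $(P,\gamma)$-partition axioms are used together, with injectivity of $\gamma$ playing an essential role. Once this is settled, the remaining implications are straightforward unpackings of the definitions.
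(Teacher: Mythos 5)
Your argument is correct, but note that the paper does not prove this lemma at all: it is quoted as \cite[Lemma 3.3.23]{LMvW} and merely illustrated with the two-element antichain example, so there is no in-paper proof to compare against. What you give is the standard proof from the literature: the easy containment $\mathscr O(s,\gamma)\subseteq\mathscr O(P,\gamma)$ for each linear extension $s$, and then, for $f\in\mathscr O(P,\gamma)$, the construction of the canonical chain $s_f$ by sorting on $f$-values with ties broken by $\gamma$, together with the observation that the two $(s,\gamma)$-partition axioms force any chain containing $f$ to agree with $s_f$ on every pair, which gives both the reverse containment and disjointness. The one place to tighten the write-up: trichotomy of $<_{s_f}$ alone does not make $s_f$ a total order; you should also note transitivity, which is immediate since $<_{s_f}$ is the pullback of the lexicographic order on pairs $\bigl(f(p),\gamma(p)\bigr)$ along the injective map $p\mapsto\bigl(f(p),\gamma(p)\bigr)$. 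With that sentence added, the proof is complete.
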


\begin{example}\label{ex:ppartition}
{Consider the poset $P=\{p,q\}$, where $p$ and $q$ are incomparable. Let $\gamma:P\to\mathbb N$ be the labelling such that $\gamma(p)=1$ and $\gamma(\tcr{q})=2$. Let $s_1$ and $s_2$ be the linear extensions of $P$ given by $p<q$ and $q<p$, respectively.}

{Then the fundamental lemma of $P$-partitions says that $\mathscr O(P,\gamma) = \mathscr O(s_1,\gamma)\dotcup \mathscr O(s_2,\gamma).$ Since $p$ and $q$ are incomparable in $P$, the set of $(P,\gamma)$-partitions consists of all maps $f:P\to \mathbb N$. Note that given such a map $f:P\to\mathbb N$, either $f(p)\le f(q)$, in which case $f$ is an $(s_1,\gamma)$-partition, or $f(q)<f(p)$, in which case $f$ is an $(s_2,\gamma)$-partition.}
\end{example}

A \textit{composition} $\alpha=\alpha_1\cdots\alpha_{\ell(\alpha)}$ is a finite ordered list of positive integers, where $\ell(\alpha)$ is the \textit{length} of $\alpha$. When $\alpha_{j+1}=\dots=\alpha_{j+m}=i$, we often abbreviate this sublist to $i^m$. We call the integers {$\alpha_i$} the \textit{parts} of $\alpha$. The \textit{size} of $\alpha$ is $|\alpha|=\alpha_1+\dots+\alpha_{\ell(\alpha)}$. If $|\alpha|=n$, we say that $\alpha$ is a composition of $n$ and write $\alpha\vDash n$. We also write $\emptyset$ to denote the empty composition. A \textit{partition} {$\lambda=\lambda_1\cdots\lambda_{\ell(\lambda)}$} is a composition with parts satisfying $\lambda_1\ge\dots\ge\lambda_{\ell(\lambda)}$. If $\lambda$ is {a} partition satisfying $|\lambda|=n$, \tcr{we say that $\lambda$ is a partition of $n$ and write $\lambda\vdash n$.} The \textit{underlying partition} $\widetilde \alpha$ of a composition $\alpha$ is the partition obtained by sorting the parts of $\alpha$ in weakly decreasing order. When $\alpha$ is a composition with $\widetilde\alpha = n^{r_n}\cdots 1^{r_1}$, let $z_\alpha= 1^{r_1}r_1!\cdots n^{r_n}r_n!$.

Given two compositions $\alpha$ and $\beta$, their \textit{concatenation} is $\alpha\cdot\beta=\alpha_1\cdots\alpha_{\ell(\alpha)}\beta_1\cdots\beta_{\ell(\beta)}$. \tcr{The multiset of \textit{shuffles} $\alpha\shuffle\beta$ of two compositions $\alpha$ and $\beta$ consists of all compositions obtained by arranging the $\alpha_i$ and $\beta_i$ so that each $\alpha_i$ appears before $\alpha_{i+1}$ and each $\beta_i$ appears \VW{before} $\beta_{i+1}$.} The \textit{reversal} of a composition $\alpha$ is the composition $\alpha^r=\alpha_{\ell(\alpha)}\cdots\alpha_1$. Let $[n]=\{1,\dots,n\}$. If $\alpha\vDash n$, then we define $\set(\alpha)=\{\alpha_1,{\alpha_1+\alpha_2},\dots,\alpha_1+\dots+\alpha_{\ell(\alpha)-1}\}\subseteq [n-1]$. This induces a natural one-to-one correspondence between the compositions of $n$ and the subsets of $[n-1]$, and we let $\comp(S)$ denote the composition of $n$ associated \FA{with} a subset $S\subseteq[n-1]$. The \textit{complement} of $\alpha\vDash n$ is \VW{$\alpha^c=\comp(\set(\alpha)^c)$, where $\set(\alpha)^c=[n-1]\setminus\set(\alpha)$,} and the \textit{transpose} of $\alpha$ is $\alpha^t=(\alpha^r)^c=(\alpha^c)^r$. 

We say that $\alpha$ is a \textit{coarsening} of $\beta$  (or equivalently $\beta$ is a \textit{refinement} of $\alpha$), denoted \tcr{by} $\alpha \succcurlyeq\beta$, if we can obtain the parts of $\alpha$ in order by adding together adjacent parts of $\beta$ in order. \VW{In that case we will write $\beta^{(i)}$ to denote the composition consisting of the parts of $\beta$ \svw{that} sum to $\alpha_i$.} If $\alpha,\beta$ are compositions of the same size, \tcr{then} $\alpha\preccurlyeq\beta$ (or equivalently, $\alpha^c\succcurlyeq \beta^c$) if and only if $\text{set}(\beta)\subseteq\text{set}(\alpha)$. Finally, if $\alpha,\beta$ are compositions of $n$, their \textit{join} $\alpha\vee\beta$ is the unique composition of $n$ such that $\gamma\succcurlyeq \alpha$ and $\gamma\succcurlyeq\beta$ if and only if $\gamma\succcurlyeq\alpha\vee\beta$.

\tcr{\begin{example}\label{ex:comp}
If $\alpha = 3212 \vDash 8$ and $\beta \vDash 53$ then $\alpha \preccurlyeq \beta$ \VW{with $\alpha^{(2)}=12$} and $\alpha \vee \beta = \beta$, $\alpha \cdot \beta = 321253$.  Also $\widetilde\alpha = 3221 \vdash 8$, $\alpha ^r = 2123$, $\text{set} (\alpha) = \{3,5,6\}$, $\alpha ^c = \text{comp}(\{1,2,4,7\}) = 11231$, so $\alpha^t = 13211$.
\end{example}}

We now turn our attention to the Hopf algebra of symmetric functions $\Sym$, which may be realised as a subalgebra of $\mathbb Q[[x_1,x_2,\dots]]$. A formal power series $f\in \mathbb Q[[x_1,x_2,\dots]]$ is a \textit{symmetric function} if the degrees of the monomials appearing in $f$ are bounded and for every partition $\lambda$ all monomials $x_{i_1}^{\lambda_1}\tcr{\cdots} x_{i_{\ell(\lambda)}}^{\lambda_{\ell(\lambda)}}$ in $f$ with distinct indices $i_1,\dots,i_{\ell(\lambda)}$ have the same coefficient.

$\Sym=\Sym^0\oplus\Sym^1\oplus\cdots$ is graded, with $n$th graded {component} $\Sym^n$ spanned by the bases $\{m_\lambda\}_{\lambda\vdash n}$ and $\{p_\lambda\}_{\lambda\vdash n}$, \tcr{known as the \emph{$m$-basis} and \emph{$p$-basis}, respectively.} The \textit{monomial symmetric function} $m_\lambda$ \tcr{where $\lambda = \lambda _ 1 \cdots \lambda _{\ell(\lambda)}$} is
$$m_\lambda=\sum_{(i_1,\dots,i_{\ell(\lambda)})}x_{i_1}^{\lambda_1}\cdots x_{i_{\ell(\lambda)}}^{\lambda_{\ell(\lambda)}},$$
where the sum is over all $\ell(\lambda)$-tuples $(i_1,\dots,i_{\ell(\lambda)})$ of distinct indices that yield distinct monomials. \tcr{For example, $m_{211} = x_1 ^2 x_2x_3 + x_1 x_2^2 x_3 + \cdots$.} The \textit{power sum symmetric function} $p_\lambda$ \tcr{where $\lambda = \lambda _ 1 \cdots \lambda _{\ell(\lambda)}$} is
$$p_\lambda = \prod_{i=1}^{\ell(\lambda)} \sum_j x_j\tcr{^{\lambda _i}}.$$\tcr{For example, $p_{211} = (x_1 ^2  + x_2^2 + \cdots)(x_1+x_2 + \cdots)(x_1+x_2 + \cdots)$.}

$\Sym$ \tcr{is a} self-dual Hopf algebra, with one isomorphism $\Sym\cong\Sym^*$ given by the \textit{Hall inner product}, which is the bilinear form obtained by setting $\langle p_\lambda, p_\mu\rangle=z_\lambda\delta_{\lambda\mu}$, \tcr{and $\delta _{\lambda\mu} = 1$ if $\lambda = \mu$ and 0 otherwise, for example \cite[(4.7)]{MacD}}. There is a nice combinatorial {interpretation} for the coefficients of the expansion of $p_\lambda$ into the $m$-basis, which we give next.

\begin{proposition}\label{prop:ptomsym}\cite[Proposition 7.7.1]{EC2}
Let $\lambda$ be a partition of $n$. Then
$$p_\lambda=\sum_{\mu\vdash n} R_{\lambda\mu} m_\mu$$
where $R _{\lambda \mu}$ is the number of $\ell (\mu)\times \ell(\lambda)$ matrices $(m_{ij})$ whose nonzero entries are the parts of $\lambda$ such that
\begin{enumerate}
\item $\sum _{j=1} ^{\ell(\lambda)} m_{ij} = \mu _i$, that is, the entries of row $i$ sum to $\mu _i$,
\item the only nonzero entry in column $j$ is $\lambda _j$.
\end{enumerate}
\end{proposition}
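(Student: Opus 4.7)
The plan is to expand the product in the definition of $p_\lambda$ and regroup the resulting monomials according to their exponent types. Expanding
\begin{equation*}
p_\lambda = \prod_{i=1}^{\ell(\lambda)} \sum_{j\ge 1} x_j^{\lambda_i} = \sum_{(j_1,\dots,j_{\ell(\lambda)})} x_{j_1}^{\lambda_1}\cdots x_{j_{\ell(\lambda)}}^{\lambda_{\ell(\lambda)}},
\end{equation*}
where the sum is over all $\ell(\lambda)$-tuples of positive integers, each tuple contributes a single monomial of total degree $n$. Fix a partition $\mu \vdash n$ and a specific monomial $M = x_{i_1}^{\mu_1}\cdots x_{i_{\ell(\mu)}}^{\mu_{\ell(\mu)}}$ with $i_1<\cdots<i_{\ell(\mu)}$. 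Since $M$ appears with coefficient $1$ in $m_\mu$ and coefficient $0$ in every other $m_{\mu'}$, the coefficient of $m_\mu$ in $p_\lambda$ equals the number of tuples that produce $M$.

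The main step is then to construct a bijection between such tuples and the $\ell(\mu)\times\ell(\lambda)$ matrices described in the proposition. Given a tuple $(j_1,\dots,j_{\ell(\lambda)})$ that produces $M$, each $j_k$ must equal some $i_s$ (otherwise an extraneous variable appears), so one may define a matrix $(m_{sk})$ by setting $m_{sk} = \lambda_k$ when $j_k = i_s$ and $m_{sk} = 0$ otherwise. Column $k$ then has its unique nonzero entry $\lambda_k$ in the row determined by $j_k$, verifying condition (2), while row $s$ sums to $\sum_{k:\,j_k=i_s}\lambda_k$, which is precisely the exponent $\mu_s$ of $x_{i_s}$ in $M$, verifying (1). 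The inverse sends a matrix $(m_{sk})$ to the tuple in which $j_k = i_s$, where $s$ is the unique row index of the nonzero entry in column $k$. This bijection identifies the coefficient of $m_\mu$ in $p_\lambda$ with $R_{\lambda\mu}$, giving the claimed formula.

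The argument is essentially a careful bookkeeping of the product expansion, so there is no deep obstacle. The only subtle point is to observe that it suffices to compute the coefficient of a single monomial of exponent type $\mu$ in order to recover the coefficient of $m_\mu$, which is immediate from the definition of $m_\mu$ as the sum of all distinct monomials whose multiset of exponents matches $\mu$.
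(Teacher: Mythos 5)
Your proof is correct: expanding the product defining $p_\lambda$, fixing a single monomial $x_{i_1}^{\mu_1}\cdots x_{i_{\ell(\mu)}}^{\mu_{\ell(\mu)}}$ of exponent type $\mu$, and biject­ing the contributing index tuples $(j_1,\dots,j_{\ell(\lambda)})$ with the matrices $(m_{sk})$ via $m_{sk}=\lambda_k$ when $j_k=i_s$ is exactly the classical argument, and your observation that the coefficient of $m_\mu$ equals the coefficient of any one monomial of type $\mu$ is the right (and only) subtle point. The paper itself does not prove this statement where it appears --- it is cited from Stanley --- but it does rederive it later, in the remark following Theorem~\ref{the:ptomqsym}: there the authors write $p_\lambda=\sum_{\widetilde\alpha=\lambda}\p_\alpha$ (Theorem~\ref{the:symptoqsymp}), apply the quasisymmetric monomial expansion of each $\p_\alpha$ in terms of the matrices counted by $\mathcal R_{\alpha\beta}$, and note that summing over all $\alpha$ with $\widetilde\alpha=\lambda$ removes the reading-word condition, leaving precisely the matrices counted by $R_{\lambda\mu}$. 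So the two routes differ genuinely: yours is elementary and self-contained, needing only the product formula for $p_\lambda$, whereas the paper's derivation depends on the full machinery of weighted $P$-partitions and the combinatorial power sum basis, and in return exhibits Proposition~\ref{prop:ptomsym} as the symmetric shadow of a finer quasisymmetric statement. Both are valid; your version is what one would cite as the standard proof, while the paper's version is offered to demonstrate that Theorem~\ref{the:ptomqsym} is a bona fide refinement.
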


\begin{example}\label{ex:ptomsym}
$$p_{211} = 2m _{211} + 2m_{31} + 2m_{22} + m_4$$from the following.
$$\begin{pmatrix}
2&&\\&1&\\&&1
\end{pmatrix}\quad
\begin{pmatrix}
2&&\\
&&1\\
&1&
\end{pmatrix}\quad
\begin{pmatrix}
2&1&\\&&1
\end{pmatrix}\quad
\begin{pmatrix}
2&&1\\&1&
\end{pmatrix}\quad
\begin{pmatrix}
2&&\\&1&1
\end{pmatrix}\quad
\begin{pmatrix}
&1&1\\2&&
\end{pmatrix}\quad
\begin{pmatrix}
2&1&1
\end{pmatrix}$$
\end{example}

In this paper, we are especially interested in the Hopf algebra of \textit{quasisymmetric functions}, which may also be realised as a subalgebra of $\mathbb Q[[x_1,x_2,\dots]]$. A formal power series $f\in\mathbb Q[[x_1,x_2,\dots]]$ is a \textit{quasisymmetric function} if the degrees of the monomials appearing in $f$ are bounded and for every composition $\alpha$ all monomials $x_{i_1}^{\alpha_1}\tcr{\cdots} x_{i_{\ell(\alpha)}}^{\alpha_{\ell(\alpha)}}$ in $f$ with distinct indices $i_1<\dots<i_{\ell(\alpha)}$ have the same coefficient. Note every symmetric function is a quasisymmetric function, and so let $\iota:\Sym\to\QSym$ denote the natural inclusion. Observe that this definition of $\QSym=\QSym(x_1,x_2,\dots)$ uses the infinite chain $\mathbb N$ to index our variables. In fact, one can use any infinite chain to define $\QSym$, as in {\cite[Definition 5.1.5]{GrinRein}.} 

Let $\{0,1\}\times\mathbb N$ be equipped with the lexicographic ordering, in which $(i,m)<(j,n)$ if and only if $i<j$ or $i=j$ with $m<n$. The coproduct $\Delta(f)$ of $f\in\QSym$ is computed by first considering $f(x_{(0,1)},x_{(0,2)},\dots,x_{(1,1)},x_{(1,2)},\dots)\in \QSym(x_{(0,1)},x_{(0,2)},\dots,x_{(1,1)},x_{(1,2)},\dots)$, where we consider quasisymmetric functions in variables indexed by the chain $\{0,1\}\times\mathbb N$. Then the coproduct $\Delta(f)$ is the image of $f(x_{(0,1)},x_{(0,2)},\dots,x_{(1,1)},x_{(1,2)},\dots)$ under the isomorphism $\QSym(x_{(0,1)},x_{(0,2)},\dots,x_{(1,1)},x_{(1,2)},\dots)\cong \QSym(x_1,x_2,\dots)\otimes \QSym(x_1,x_2,\dots)$ obtained by sending \tcr{$f_a(x_{(0,1)},x_{(0,2)},\dots)f_b(x_{(1,1)},x_{(1,2)},\dots)\mapsto f_a(x_1,x_2,\dots)\otimes f_b(x_1,x_2,\dots)$} and extending linearly {\cite[Section 5.1]{GrinRein}}.

$\QSym=\QSym^0\oplus\QSym^1\oplus\cdots$ is graded, with $n$th graded {component} $\QSym^n$ spanned by the bases $\{M_\alpha\}_{\alpha\vDash n}$ and $\{F_\alpha\}_{\alpha\vDash n}$, \tcr{known as the \emph{monomial} and \emph{fundamental basis}, respectively.} The \textit{monomial quasisymmetric function} $M_\alpha$ \tcr{where $\alpha = \alpha _1 \cdots \alpha _{\ell(\alpha)}$} is
$$M_\alpha = \sum_{i_1<\dots<i_{\ell(\alpha)}}x_{i_1}^{\alpha_1}\cdots x_{i_{\ell(\alpha)}}^{\alpha_{\ell(\alpha)}}.$$\tcr{For example, $M_{112} = x_1x_2x_3^2 + x_1x_2x_4^2+\cdots $.}
Note that the monomial quasisymmetric functions naturally refine the monomial symmetric functions via
$$m_\lambda=\sum_{\widetilde\alpha=\lambda}M_\alpha.$$
The \textit{fundamental quasisymmetric function} $F_\alpha$ \tcr{where $\alpha = \alpha _1 \cdots \alpha _{\ell(\alpha)}$} is
$$F_\alpha = \sum_{\substack{i_1\le\dots\le i_{\ell(\alpha)}\\ i_j<i_{j+1}\text{ if }j\in\set(\alpha)}} x_{i_1}\cdots x_{i_{\ell(\alpha)}}.$$\tcr{For example, $F_{112} = x_1x_2x_3^2 + x_1x_2x_3x_4+\cdots $.}
The monomial and fundamental bases are related by the equation
$$M_{\alpha}=\sum_{\beta\preccurlyeq\alpha} (-1)^{\ell(\beta)-\ell(\alpha)}F_\beta.$$

Three important involutory automorphisms $\QSym\to\QSym$ are defined on the fundamental basis by \tcr{$$\psi(F_\alpha)=F_{\alpha^c},\qquad \rho(F_\alpha)=F_{\alpha^r},\qquad \omega(F_\alpha)=F_{\alpha^t}$$}\tcr{for example, \cite{Gelfandetal}, although we follow the notation of \cite[Section 3.6]{LMvW}.} The restriction of $\omega:\QSym\to\QSym$ to $\Sym$ is an involutory automorphism $\omega:\Sym\to\Sym$ \tcr{ satisfying $\omega(p_\lambda)= (-1)^{n-\ell(\lambda)} p_\lambda$ for $\lambda \vdash n$ \cite[(2.13)]{MacD}.}

The Hopf algebra of noncommutative symmetric functions $\NSym$ can be defined as the {graded Hopf dual} of $\QSym$ {(see \cite[Section 3.4.2]{LMvW}).} It is spanned by the basis $\{\mathbf h_\alpha\}_{\alpha\vDash n\ge 0}$, which is dual to the monomial basis $\{M_{\alpha}\}_{\alpha\vDash n\ge 0}$ of $\QSym$. That is, each $\langle M_\alpha,\mathbf h_\beta\rangle =\delta_{\alpha\beta}$, where \tcr{$\delta_{\alpha\beta} = 1$ if $\alpha = \beta$ and 0 otherwise, and} $\langle\cdot,\cdot\rangle$ is the inner product between $\QSym$ and its dual. There is a surjective morphism $\chi:\NSym\to \Sym$, defined implicitly by letting $\langle f,\chi(g)\rangle = \langle \iota(f),g\rangle$ for all $f\in\Sym$ and all $g\in\NSym$. The involutory antiautomorphism $\omega:\NSym\to\NSym$ {can be} defined implicitly by letting $\langle \omega( f),g\rangle = \langle f,\omega( g)\rangle$ for all $f\in\QSym$ and $g\in \NSym$ {\cite[Section 3.6]{LMvW}}.

Given a set $S$ and a nonnegative integer $k$, write ${S\choose k}$ \VW{to} denote the set of all $k$-element subsets of $S$. Finally, we will also use the shorthand $[b_i]f$ to denote the coefficient of {$b_i$ when $f$ is} expanded into a basis $\{b_i\}_{i\in I}$.

\section{Weighted $P$-partitions}\label{sec:P-part}

In this section we will introduce the generating function of $P$-partitions associated \FA{with} a weighted labelled poset $(P,\gamma,w)$ and investigate its basic properties. \tcr{Our definition generalises that of \cite{AlexSulz} where the poset is naturally labelled, and is itself further generalised to weighted double posets in \cite{Grinberg}.} Given a poset $P$, a \textit{weight function} is a map $w: P\to\mathbb N$. A triple $(P,\gamma, w)$ where $(P,\gamma)$ is a labelled poset and $w$ is a weight function is a \textit{weighted labelled poset}, \tcr{with \emph{total weight} $w(P) = \sum _{u\in P} w(u)$}.

\begin{definition}\label{def:KPgw}
Given a weighted labelled poset $(P,\gamma,w)$, we define the generating function
$$K_{(P,\gamma,w)}=\sum_f \prod_{u\in P} x_{f(u)}^{w(u)},$$
where the sum is over all $(P,\gamma)$-partitions $f:P\to\mathbb N$.
\end{definition}

\tcr{It is straightforward to see from its definitions that $K_{(P,\gamma,w)}$ is a quasisymmetric function. Also note that if $w(u)=1$ for all $u\in P$, then $K_{(P,\gamma,w)}$ is the usual generating function for $P$-partitions, also called the weight enumerator.}
\tcr{\begin{example}\label{ex:weightedp} Continuing Example~\ref{ex:ppartition} with $\gamma(p)=1$ and $\gamma(q)=2$, let us now attach weights $w(p)=5$ and $w(q)=3$. Then
$$K_{(P,\gamma,w)}= x_1^5x_2^3+x_1^3x_2^5+\cdots +x_1^8+x_2^8+\cdots = (x_1^5+x_2^5+\cdots)(x_1^3+x_2^3+\cdots)= K_{(p,\gamma,w)}K_{(q,\gamma,w)}.$$
\end{example}}

\tcr{Therefore we will determine the} product and coproduct rules for these generating functions, as well as the effects of the quasisymmetric involutions $\psi$, $\rho$ and $\omega$ \tcr{next. Concrete examples are in Section~\ref{sec:power}, where we apply these results to develop \VW{the basis of} combinatorial power sums, apart from the following, for which we have the previous example.}

\begin{proposition}\label{prop:Kprod}
Let $(P\dotcup Q,\gamma,w)$ be a weighted labelled poset. Then
$$K_{(P,\gamma,w)} K_{(Q,\gamma,w)}=K_{(P\dotcup Q,\gamma,w)}.$$
\end{proposition}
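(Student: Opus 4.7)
The plan is to exhibit a weight-preserving bijection between $(P \dotcup Q, \gamma)$-partitions and pairs consisting of a $(P,\gamma)$-partition and a $(Q,\gamma)$-partition, and then to observe that the monomial weight factors across the disjoint union. The result then follows by rearranging the defining sum.

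First, I would unpack the structure of $P \dotcup Q$. By definition, no element of $P$ is comparable to any element of $Q$, so the relation $p < q$ in $P \dotcup Q$ holds only when $p, q$ lie entirely in $P$ or entirely in $Q$. Consequently, the two defining conditions of Definition~\ref{def:P-part} for a map $h : P \dotcup Q \to \mathbb N$ to be a $(P \dotcup Q, \gamma)$-partition only impose constraints within $P$ and within $Q$ separately. The labeling $\gamma$ restricts to a labeling on each (since the restriction of an injection into a chain is still an injection into that chain), and similarly for the weight function $w$.

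Next, I would set up the bijection. Given a $(P \dotcup Q, \gamma)$-partition $h$, the restrictions $f = h|_P$ and $g = h|_Q$ are automatically a $(P, \gamma)$-partition and a $(Q, \gamma)$-partition respectively, by the previous observation. Conversely, given a $(P, \gamma)$-partition $f$ and a $(Q, \gamma)$-partition $g$, the common extension $h$ defined by $h(u) = f(u)$ for $u \in P$ and $h(u) = g(u)$ for $u \in Q$ satisfies both partition conditions on $P \dotcup Q$, since each pair $p < q$ falls entirely within $P$ or within $Q$. These operations are mutual inverses, yielding the desired bijection.

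Finally, I would observe that under this bijection the weighted monomial factors as
\[
\prod_{u \in P \dotcup Q} x_{h(u)}^{w(u)} = \prod_{u \in P} x_{f(u)}^{w(u)} \cdot \prod_{v \in Q} x_{g(v)}^{w(v)},
\]
so summing over all $(P \dotcup Q, \gamma)$-partitions and distributing the product gives $K_{(P \dotcup Q, \gamma, w)} = K_{(P, \gamma, w)} K_{(Q, \gamma, w)}$. There is no real obstacle here; the whole argument is an unpacking of definitions, with the only point requiring any care being the verification that the partition conditions on $P \dotcup Q$ truly decouple across the two summands, which follows from the fact that elements of $P$ are incomparable with elements of $Q$.
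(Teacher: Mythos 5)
Your proposal is correct and follows essentially the same route as the paper: both identify $(P\dotcup Q,\gamma)$-partitions with pairs consisting of a $(P,\gamma)$-partition and a $(Q,\gamma)$-partition, using the incomparability of elements across the two summands, and then factor the weighted monomial. No issues.
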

\begin{proof}
From Definition~\ref{def:P-part}, it is clear \tcr{that} since elements of $P$ are incomparable to elements of $Q$ in $P\dotcup Q$ that $\mathscr O(P\dotcup Q,\gamma)$ consists exactly of the maps $f:P\dotcup Q\to \mathbb N$ such that the restrictions $f:P\to \mathbb N$ and $f:Q\to\mathbb N$ are a $(P,\gamma)$- and a $(Q,\gamma)$-partition, respectively.

Therefore,
$$K_{(P,\gamma,w)}K_{(Q,\gamma,w)}=\sum_{f_1,f_2}\left(\prod_{u\in P}x_{f_1(u)}^{w(u)} \prod_{v\in Q}x_{f_2(v)}^{w(v)}\right) =K_{(P\dotcup Q,\gamma,w)} ,$$
as desired, where the sum is over all $(P,\gamma)$-partitions $f_1:P\to\mathbb N$ and all $(Q,\gamma)$-partitions $f_2:Q\to\mathbb N$.
\end{proof}

\begin{proposition} \label{prop:Kcop}
Let $(P,\gamma,w)$ be a weighted labelled poset. Then
$$\Delta(K_{(P,\gamma,w)}) = \sum_{I} K_{(I,\gamma,w)}\otimes K_{(P\setminus I,\gamma,w)},$$
where the sum is over all {lower sets} $I$ of $P$.
\end{proposition}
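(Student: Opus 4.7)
The plan is to unfold the definition of the coproduct on $\QSym$ in terms of the variable splitting $\{0,1\}\times\mathbb N$, and then set up an explicit bijection between $(P,\gamma)$-partitions into that doubled chain and pairs of $P$-partitions indexed by lower sets of $P$.

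First, I would evaluate $K_{(P,\gamma,w)}$ in the variables $\{x_{(i,m)}\}_{(i,m)\in\{0,1\}\times\mathbb N}$ (with the lexicographic chain structure), obtaining
$$K_{(P,\gamma,w)}(x_{(0,1)},x_{(0,2)},\dots,x_{(1,1)},x_{(1,2)},\dots)=\sum_{f}\prod_{u\in P} x_{f(u)}^{w(u)},$$
where $f$ ranges over $(P,\gamma)$-partitions into the chain $\{0,1\}\times\mathbb N$. For each such $f$, set $I_f=f^{-1}(\{0\}\times\mathbb N)$. The key geometric observation is that $I_f$ is automatically a lower set of $P$: if $p\le q$ in $P$ with $q\in I_f$, then order-preservation gives $f(p)\le f(q)=(0,m)$, forcing $f(p)\in\{0\}\times\mathbb N$, hence $p\in I_f$.

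Next, I would describe the inverse correspondence. Given a lower set $I\subseteq P$, an $(I,\gamma)$-partition $f_1:I\to\mathbb N$, and a $(P\setminus I,\gamma)$-partition $f_2:P\setminus I\to\mathbb N$, define $f:P\to\{0,1\}\times\mathbb N$ by $f(u)=(0,f_1(u))$ for $u\in I$ and $f(u)=(1,f_2(u))$ for $u\in P\setminus I$. I would then verify that $f$ is a $(P,\gamma)$-partition by a case analysis on $p<q$ in $P$: if both are in $I$ or both are in $P\setminus I$, the two $P$-partition axioms reduce to the axioms for $f_1$ or $f_2$; the case $p\in I$, $q\in P\setminus I$ is automatic because $(0,\cdot)<(1,\cdot)$ with strict inequality; and the case $p\in P\setminus I$, $q\in I$ cannot occur since $I$ is a lower set. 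This establishes a bijection
$$\mathscr O(P,\gamma;\{0,1\}\times\mathbb N)\;\longleftrightarrow\;\bigsqcup_{I\text{ lower set}}\mathscr O(I,\gamma)\times\mathscr O(P\setminus I,\gamma),$$
and under it the monomial $\prod_{u\in P}x_{f(u)}^{w(u)}$ factors as $\bigl(\prod_{u\in I}x_{(0,f_1(u))}^{w(u)}\bigr)\bigl(\prod_{u\in P\setminus I}x_{(1,f_2(u))}^{w(u)}\bigr)$.

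Finally, I would apply the isomorphism $\QSym(x_{(0,1)},\dots,x_{(1,1)},\dots)\cong\QSym\otimes\QSym$ used in the definition of $\Delta$, sending the factored monomial to $\prod_{u\in I}x_{f_1(u)}^{w(u)}\otimes\prod_{u\in P\setminus I}x_{f_2(u)}^{w(u)}$, which recognizes the right-hand side as $\sum_I K_{(I,\gamma,w)}\otimes K_{(P\setminus I,\gamma,w)}$. The main obstacle is purely the lower-set observation: showing that the preimage of the first block under an order-preserving map is forced to be downward-closed in $P$, which ensures that no ``wrong-direction'' compatibility conditions arise between the two factors; once that is in hand, the rest is the bookkeeping of splitting a $P$-partition into its two halves and invoking Definition~\ref{def:KPgw} on each piece.
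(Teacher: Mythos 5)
Your proposal is correct and follows essentially the same route as the paper: both unfold the coproduct via the doubled chain $\{0,1\}\times\mathbb N$, observe that the preimage of the block $\{0\}\times\mathbb N$ under an order-preserving map must be a lower set of $P$, and check that the two restrictions are exactly an $(I,\gamma)$- and a $(P\setminus I,\gamma)$-partition. The only difference is notational — the paper encodes your map $f:P\to\{0,1\}\times\mathbb N$ as a pair $(f_1,f_2)$ — so there is nothing substantive to add.
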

\begin{proof}
By the definition of $K_{(P,\gamma,w)}$ and the coproduct on $\QSym$,
$$\Delta(K_{(P,\gamma,w)}) = \sum_{f_1,f_2}\left( \prod_{f_1(u)=0} x_{f_2(u)}^{w(u)}\otimes \prod_{f_1(v)=1} x_{f_2(v)}^{w(v)}\right),$$
where the sum is over all pairs of maps $f_1:P\to \{0,1\}$ and $f_2:P\to \mathbb N$ satisfying, for all $p<q$ in $P$, \tcr{(1)} that $(f_1(p),f_2(p))\le (f_1(q),f_2(q))$ in $\{0,1\}\times \mathbb N$ with the lexicographic ordering and \tcr{(2)} $(f_1(p),f_2(p))=(f_1(q),f_2(q))$ only if $\gamma(p)<\gamma(q)$.

Given such a pair $(f_1,f_2)$, the preimage $f_1^{-1}(0)$ must be {a lower set} of $P$, since $(f_1,f_2):P\to \{0,1\}\times\mathbb N$ is order-preserving and $\{0\}\times\mathbb N$ is {a lower set} of $\{0,1\}\times\mathbb N$. Given {a lower set} $I$ of $P$, a pair $(f_1,f_2)$ with $f_1^{-1}(0)=I$ satisfies the two conditions if and only if the restrictions $f_2: I\to \mathbb N$ and $f_2:P\setminus I\to \mathbb N$ are an $(I,\gamma)$- and a $(P\setminus I,\gamma)$-partition, respectively. It follows that
$$\Delta(K_{(P,\gamma,w)}) = \sum_I \sum_{f_2} \left(\prod_{u\in I} x_{f_2(u)}^{w(u)}\otimes \prod_{v\in P\setminus I}x_{f_2(v)}^{w(v)}\right) = \sum_I K_{(I,\gamma,w)}\otimes K_{(P\setminus I,\gamma, w)},$$
where the inner sum is over all maps $f_2:P\to\mathbb N$ such that the restrictions to $I$ and $P\setminus I$ are an $(I,\gamma)$- and a $(P\setminus I,\gamma)$-partition, respectively.
\end{proof}

We briefly direct our attention to weighted labelled chains and the expansions of the generating functions of their $P$-partitions into the monomial and fundamental bases.

\begin{definition}
Let $(s,\gamma,w)$ be a weighted labelled chain, where $s$ is given by $u_1<\dots<u_\ell$. Then $\alpha(s,w)$ is the composition \tcr{$w(u_1)\cdots w(u_\ell)$} and $\delta(s,\gamma,w)\succcurlyeq \alpha(s,w)$ is the composition $\comp(\{\sum_{j=1}^k w(u_j): \gamma(u_k)>\gamma(u_{k+1})\})$.
\end{definition}

\tcr{\begin{example}\label{ex:funnycomps}
Let us consider the chain $s= u_1<u_2<u_3$ with  labels $\gamma (u_1)=3, \gamma(u_2)=1, \gamma(u_3)=2$, and $w(u_1)=7, w(u_2)=5, w(u_3)=4$. Then $\alpha(s,w)=754$ and $\delta(s,\gamma,w)=\text{comp}(\{7\})=79$.
\end{example}}

\begin{lemma}\label{lem:chaintoM}
Let $(s,\gamma,w)$ be a weighted labelled chain. Then
$$K_{(s,\gamma, w)}=\sum_{\alpha(s,w)\preccurlyeq\beta\preccurlyeq\delta(s,\gamma,w)}M_\beta.$$
\end{lemma}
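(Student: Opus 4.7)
The plan is to group the $(s,\gamma)$-partitions of the chain by their "block composition" and show that each block composition $\beta$ contributes a copy of $M_\beta$, with $\beta$ ranging exactly over the stated interval.

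First I would unpack what a $(s,\gamma)$-partition $f$ looks like on the chain $u_1<\cdots<u_\ell$. By condition (1) of Definition~\ref{def:P-part}, $f(u_1)\le\cdots\le f(u_\ell)$, so $f$ naturally partitions $[\ell]$ into maximal intervals $I_1,\dots,I_m$ on which $f$ is constant, taking strictly increasing values $j_1<\cdots<j_m$ on these blocks. To each such $f$ I associate the composition
$$\beta(f)=\Bigl(\textstyle\sum_{i\in I_1}w(u_i),\ \dots,\ \sum_{i\in I_m}w(u_i)\Bigr),$$
so the monomial contributed by $f$ to $K_{(s,\gamma,w)}$ is exactly $x_{j_1}^{\beta_1}\cdots x_{j_m}^{\beta_m}$.

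Next I would verify the two bounds on $\beta=\beta(f)$. Since $\beta$ is obtained from $\alpha(s,w)=(w(u_1),\dots,w(u_\ell))$ by summing consecutive parts according to the blocks, $\beta\succcurlyeq\alpha(s,w)$, i.e.\ $\alpha(s,w)\preccurlyeq\beta$. For the upper bound, condition (2) of Definition~\ref{def:P-part} forces $f(u_k)<f(u_{k+1})$ at every descent of $\gamma$ (i.e.\ whenever $\gamma(u_k)>\gamma(u_{k+1})$), so such a $k$ must be a block boundary, meaning $\sum_{j=1}^k w(u_j)\in\set(\beta)$. Hence $\set(\delta(s,\gamma,w))\subseteq\set(\beta)$, which is exactly $\beta\preccurlyeq\delta(s,\gamma,w)$.

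Finally, I would run the correspondence in reverse: for any composition $\beta$ with $\alpha(s,w)\preccurlyeq\beta\preccurlyeq\delta(s,\gamma,w)$, the $(s,\gamma)$-partitions with $\beta(f)=\beta$ are precisely the maps $f$ that are constant on each of the $\ell(\beta)$ blocks determined by $\beta$ and strictly increase between blocks; condition (2) is automatically satisfied because the block boundaries of $\beta$ contain all descents of $\gamma$. Summing $x_{j_1}^{\beta_1}\cdots x_{j_{\ell(\beta)}}^{\beta_{\ell(\beta)}}$ over all choices $j_1<\cdots<j_{\ell(\beta)}$ in $\mathbb N$ yields $M_\beta$, and grouping over $\beta$ gives the desired identity. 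The main (modest) obstacle is just organising the bookkeeping that identifies "block boundaries of $f$" with the interval $[\alpha(s,w),\delta(s,\gamma,w)]$ in the refinement order; once the correspondence $f\leftrightarrow(\beta,\,j_1<\cdots<j_{\ell(\beta)})$ is set up, the rest is a direct collection of terms.
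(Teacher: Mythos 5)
Your proposal is correct and follows essentially the same route as the paper: both arguments characterise the $(s,\gamma)$-partitions as weakly order-preserving maps whose set of strict-increase positions contains the descent set of $\gamma$, and then group the resulting monomials by the induced coarsening $\beta$ of $\alpha(s,w)$, with each class summing to $M_\beta$. Your "block composition" $\beta(f)$ is just the paper's subset $S$ with $\set(\delta(s,\gamma,w))\subseteq S\subseteq\set(\alpha(s,w))$ written as $\comp(S)$, so the two proofs coincide up to bookkeeping.
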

\begin{proof}
Let $s$ be given by $u_1<\dots<u_\ell$. An order-preserving map $f:s\to\mathbb N$ is an $(s,\gamma)$-partition if and only if $\{u_j:\gamma(u_j)>\gamma(u_{j+1})\}\subseteq\{u_j:f(u_j)<f(u_{j+1})\}$.

Therefore,
$$K_{(s,\gamma,w)}=\sum_{\set(\delta(s,\gamma,w))\subseteq S\subseteq \set(\alpha(s,w))}\sum_{(i_1,\dots,i_{w(s)})}x_{i_1}\cdots x_{i_{w(s)}},$$
where the inner sum is over all $w(s)$-tuples $(i_1,\dots,i_{w(s)})$ such that $i_1\le\dots\le i_{w(s)}$ with $i_j<i_{j+1}$ if and only if {$j\in S$}. So
\begin{align*}
    K_{(s,\gamma,w)}&=\sum_{\set(\delta(s,\gamma,w)\subseteq S\subseteq \set(\alpha(s,w))} M_{\comp (S)}=\sum_{\alpha(s,w)\preccurlyeq\beta\preccurlyeq\delta(s,\gamma,w)}M_\beta.
\end{align*}
\end{proof}

\begin{lemma}\label{lem:chaintoF}
Let $(s,\gamma,w)$ be a weighted labelled chain. Then
$$K_{(s,\gamma,w)}=\sum_{ \delta(s,\gamma^*,w)^c\preccurlyeq \beta\preccurlyeq \delta(s,\gamma,w)} (-1)^{\ell(\beta)-\ell(\delta(s,\gamma,w))} F_\beta.$$
\end{lemma}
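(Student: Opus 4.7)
The plan is to derive Lemma~\ref{lem:chaintoF} directly from Lemma~\ref{lem:chaintoM} by applying the monomial-to-fundamental change-of-basis formula $M_\sigma = \sum_{\beta \preccurlyeq \sigma} (-1)^{\ell(\beta)-\ell(\sigma)} F_\beta$ stated in the preliminaries, swapping the order of summation, and then simplifying the resulting alternating sum via a standard Boolean-interval identity.

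Concretely, I would first substitute this expansion into Lemma~\ref{lem:chaintoM} and exchange the order of summation to obtain
$$K_{(s,\gamma,w)} = \sum_\beta \left(\sum_\sigma (-1)^{\ell(\beta)-\ell(\sigma)}\right) F_\beta,$$
where the inner sum ranges over compositions $\sigma$ satisfying $\alpha(s,w) \preccurlyeq \sigma \preccurlyeq \delta(s,\gamma,w)$ and $\beta \preccurlyeq \sigma$. Translating through the bijection $\comp$ and writing $A = \set(\alpha(s,w))$, $D = \set(\delta(s,\gamma,w))$, $B = \set(\beta)$, and $T = \set(\sigma)$, the inner sum becomes
$$\sum_{D \subseteq T \subseteq A \cap B} (-1)^{\ell(\beta) - |T| - 1}.$$
Applying the identity that $\sum_{X \subseteq T \subseteq Y} (-1)^{|T|}$ equals $(-1)^{|X|}$ when $X = Y$ and $0$ otherwise, the inner sum vanishes unless $D = A \cap B$, in which case it equals $(-1)^{\ell(\beta) - \ell(\delta(s,\gamma,w))}$, matching the sign in the statement.

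The step I expect to be the main bookkeeping obstacle is showing that the condition $D = A \cap B$ coincides with the index range $\delta(s,\gamma^*,w)^c \preccurlyeq \beta \preccurlyeq \delta(s,\gamma,w)$ appearing in the statement. The key observation is that the labelling $\gamma$ is injective on the chain $u_1 < \cdots < u_\ell$, so at each partial-sum position $k \in \{1,\dots,\ell-1\}$ exactly one of $\gamma(u_k) > \gamma(u_{k+1})$ or $\gamma(u_k) < \gamma(u_{k+1})$ holds. Consequently $\set(\delta(s,\gamma,w))$ and $\set(\delta(s,\gamma^*,w))$ form a disjoint partition of $A$. The equality $D = A \cap B$ then splits into (i) $D \subseteq B$, equivalent to $\beta \preccurlyeq \delta(s,\gamma,w)$, together with (ii) $B \cap \set(\delta(s,\gamma^*,w)) = \emptyset$, equivalent to $\set(\beta) \subseteq [n-1] \setminus \set(\delta(s,\gamma^*,w)) = \set(\delta(s,\gamma^*,w)^c)$, i.e., $\delta(s,\gamma^*,w)^c \preccurlyeq \beta$. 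Conditions (i) and (ii) together yield exactly the indexing set of the statement, completing the argument.
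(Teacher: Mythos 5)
Your proof is correct, and it takes a genuinely different route from the paper's. The paper proves Lemma~\ref{lem:chaintoF} directly from Definition~\ref{def:KPgw}: it writes $K_{(s,\gamma,w)}$ as a sum over weakly increasing tuples in which strict increases are forced at the positions of $\set(\delta(s,\gamma,w))$, equalities are forced at the positions outside $\set(\alpha(s,w))$, and the positions of $\set(\delta(s,\gamma^*,w))$ are unconstrained, and then applies inclusion-exclusion over the forced-equality positions to land on the signed sum of fundamentals in one step. You instead take Lemma~\ref{lem:chaintoM} as the starting point, substitute the $M$-to-$F$ transition from the preliminaries, and collapse the double sum with the standard alternating sum over a Boolean interval. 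Both arguments ultimately rest on the same structural fact, which you make explicit and the paper uses implicitly when it writes $\set(\alpha(s,w))=\set(\delta(s,\gamma,w))\cup\set(\delta(s,\gamma^*,w))$: injectivity of $\gamma$ makes this union disjoint, and that disjointness is exactly what converts your support condition $D=A\cap B$ into the two-sided interval $\delta(s,\gamma^*,w)^c\preccurlyeq\beta\preccurlyeq\delta(s,\gamma,w)$. Your version buys reuse of the previously established monomial expansion and reduces the lemma to a purely formal computation in the lattice of compositions; the paper's version buys a self-contained derivation from the generating function itself. Your sign bookkeeping, via $\ell(\sigma)=|\set(\sigma)|+1$, checks out.
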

\begin{proof}
Note we may write $$K_{(s,\gamma,w)}=\sum_{(i_1,\dots, i_{w(s)})} x_{i_1}\cdots x_{i_{w(s)}},$$ where the sum is over all $w(s)$-tuples $(i_1,\dots,i_{w(s)})$ of positive integers satisfying $i_1\le \dots \le i_{w(s)}$ such that $i_k=i_{k+1}$ whenever $k\not\in \set(\alpha(s,w))= \set(\delta(s,\gamma,w))\cup \set(\delta(s,\gamma^*,w))$ and $i_k<i_{k+1}$ whenever $k\in \set(\delta(s,\gamma,w))$.

By the principle of inclusion-exclusion,
\begin{align*}
    K_{(s,\gamma,w)} &= \sum_{\set(\delta(s,\gamma,w))\subseteq S\subseteq\set(\delta(s,\gamma^*,w))^c}(-1)^{|S|-|\set(\delta(s,\gamma,w))|}F_{\comp(S)}\\&=\sum_{ \delta(s,\gamma^*,w)^c\preccurlyeq \beta\preccurlyeq \delta(s,\gamma,w)} (-1)^{\ell(\beta)-\ell(\delta(s,\gamma,w))} F_\beta.
\end{align*}\end{proof}

The expansion into fundamental \tcr{quasisymmetric functions} in Lemma~\ref{lem:chaintoF} will be useful in computing the effects of the involutions $\psi$, $\rho$ and $\omega$ on $K_{(P,\gamma,w)}$.

\begin{lemma}
\label{lem:chaininv}
Let $(s,\gamma, w)$ be a weighted labelled chain. Then
\begin{enumerate}
    \item $\psi (K_{(s,\gamma,w)}) =(-1)^{w(s)-|s|} K_{(s,\gamma^*,w)}$,
    \item $\rho (K_{(s,\gamma,w)}) = K_{(s^*,\gamma^*,w)}$,
    \item $\omega (K_{(s,\gamma,w)}) =(-1)^{w(s)-|s|} K_{(s^*,\gamma,w)}$.
\end{enumerate}
\end{lemma}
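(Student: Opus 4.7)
The plan is to start from the fundamental expansion
$$K_{(s,\gamma,w)}=\sum_{\delta(s,\gamma^*,w)^c\preccurlyeq \beta \preccurlyeq \delta(s,\gamma,w)}(-1)^{\ell(\beta)-\ell(\delta(s,\gamma,w))}F_\beta$$
provided by Lemma~\ref{lem:chaintoF} and apply each involution term by term using its action on the fundamental basis. Because complementation reverses, and reversal preserves, the refinement order, and both are size-preserving involutions on compositions, each case reduces to reindexing the sum and tracking how the range and sign transform into the corresponding fundamental expansion on the right-hand side.

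For part (1), applying $\psi$ turns $F_\beta$ into $F_{\beta^c}$, so substituting $\beta'=\beta^c$ and using $(\gamma^*)^*=\gamma$ yields a sum indexed by $\delta(s,\gamma,w)^c\preccurlyeq \beta'\preccurlyeq \delta(s,\gamma^*,w)$, which is exactly the index set for $K_{(s,\gamma^*,w)}$ via Lemma~\ref{lem:chaintoF}. The sign calculation needs the elementary identity $\ell(\beta)+\ell(\beta^c)=|\beta|+1$ together with the combinatorial identity $\ell(\delta(s,\gamma,w))+\ell(\delta(s,\gamma^*,w))=|s|+1$, which holds because $\set(\delta(s,\gamma,w))$ and $\set(\delta(s,\gamma^*,w))$ partition $\set(\alpha(s,w))$ into descents and ascents of $\gamma$ along $s$. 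Combining these parities extracts the global factor $(-1)^{w(s)-|s|}$.

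For part (2), applying $\rho$ turns $F_\beta$ into $F_{\beta^r}$. The crucial identity to establish is $\delta(s,\gamma,w)^r=\delta(s^*,\gamma^*,w)$: if $s$ is $u_1<\dots<u_\ell$, the $k$th descent sum $\sum_{j=1}^k w(u_j)$ for $(s,\gamma)$ corresponds under reversal of the composition to $\sum_{j=k+1}^\ell w(u_j)$, which is precisely a descent sum for $(s^*,\gamma^*)$ because passing to $s^*$ reverses the enumeration of chain elements while passing to $\gamma^*$ swaps ascents and descents, and the two swaps compose to identify descents on one side with descents on the other. Since reversal of compositions preserves both refinement order and length, the resulting sum matches the fundamental expansion of $K_{(s^*,\gamma^*,w)}$ without any extra sign.

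Part (3) then follows by composition: on the fundamental basis $\omega(F_\alpha)=F_{\alpha^t}=F_{(\alpha^r)^c}=\psi(\rho(F_\alpha))$, so $\omega=\psi\circ\rho$ on $\QSym$, and applying (2) then (1) together with $w(s^*)=w(s)$, $|s^*|=|s|$, and $(\gamma^*)^*=\gamma$ yields the claim. I expect the main technical obstacle to be the sign bookkeeping in part (1), where four length parities (for $\beta$, $\beta^c$, $\delta(s,\gamma,w)$, and $\delta(s,\gamma^*,w)$) must be combined to produce the compact answer $(-1)^{w(s)-|s|}$; the secondary subtlety is the identification $\delta(s,\gamma,w)^r=\delta(s^*,\gamma^*,w)$ in part (2), which looks cosmetic but requires care because dualizing $s$ and dualizing $\gamma$ act on different pieces of the definition of $\delta$ and only their combination produces a clean reversal.
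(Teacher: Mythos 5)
Your proposal is correct and follows essentially the same route as the paper: apply each involution to the fundamental expansion of Lemma~\ref{lem:chaintoF}, reindex the sum, and use the identities $\ell(\delta(s,\gamma,w))+\ell(\delta(s,\gamma^*,w))=|s|+1$ and $\delta(s,\gamma,w)^r=\delta(s^*,\gamma^*,w)$ to match ranges and signs. The only (harmless) variation is that you derive part (3) by composing $\omega=\psi\circ\rho$ rather than repeating the direct computation with the transpose, which the paper leaves as ``similar.''
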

\begin{proof}
Applying $\psi$ to Lemma~\ref{lem:chaintoF},
\begin{align*}
    \psi(K_{(s,\gamma,w)}) &= \sum_{\delta(s,\gamma,w)^c\preccurlyeq \beta \preccurlyeq \delta(s,\gamma^*,w)} (-1)^{\ell(\delta(s,\gamma,w)^c)-\ell(\beta)}F_\beta\\
    &=(-1)^{\ell(\delta(s,\gamma,w)^c)-\ell(\delta(s,\gamma^*,w))} K_{(s,\gamma^*,w)},
\end{align*}
where the second line is also by Lemma~\ref{lem:chaintoF}. Then (1) follows since $\ell(\delta(s,\gamma,w)^c)-\ell(\delta(s,\gamma^*,w))=w(s)-|s|$. The proofs of (2) and (3) are similar, noting by the definitions that $\delta(s,\gamma,w)^r=\delta(s^*,\gamma^*,w)$ and $\delta(s,\gamma^*,w)^r=\delta(s^*,\gamma,w)$.
\end{proof}

\begin{proposition}
\label{prop:posetinv}
Let $(P,\gamma, w)$ be a weighted labelled poset. Then
\begin{enumerate}
    \item $\psi (K_{(P,\gamma,w)}) =(-1)^{w(P)-|P|} K_{(P,\gamma^*,w)}$,
    \item $\rho (K_{(P,\gamma,w)}) = K_{(P^*,\gamma^*,w)}$,
    \item $\omega (K_{(P,\gamma,w)}) =(-1)^{w(P)-|P|} K_{(P^*,\gamma,w)}$.
\end{enumerate}
\end{proposition}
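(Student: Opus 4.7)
The plan is to reduce Proposition~\ref{prop:posetinv} to the chain case (Lemma~\ref{lem:chaininv}) via the Fundamental Lemma of $P$-partitions (Lemma~\ref{lem:P-part}), which gives the expansion
\[
K_{(P,\gamma,w)} \;=\; \sum_{s} K_{(s,\gamma,w)}
\]
where $s$ ranges over all linear extensions of $P$. Since $\psi$, $\rho$, and $\omega$ are all linear, I can apply them termwise to this sum and then invoke the three parts of Lemma~\ref{lem:chaininv} on each chain summand.

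For (1), termwise application gives $\psi(K_{(P,\gamma,w)}) = \sum_s (-1)^{w(s)-|s|}K_{(s,\gamma^*,w)}$. The key observation is that the sign is the same for every linear extension: $w(s) = \sum_{u\in P}w(u) = w(P)$ and $|s| = |P|$, because every linear extension has the same underlying set as $P$. So the sign pulls out, and I am left with $\sum_s K_{(s,\gamma^*,w)}$. Since the linear extensions of $P$ do not depend on the labelling, applying Lemma~\ref{lem:P-part} in reverse to $(P,\gamma^*,w)$ collapses this sum to $K_{(P,\gamma^*,w)}$, giving (1).

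For (2) and (3), the new ingredient is the bijection between linear extensions of $P$ and linear extensions of $P^*$: a chain $s$ is a linear extension of $P$ iff its dual $s^*$ is a linear extension of $P^*$. So for (2), Lemma~\ref{lem:chaininv}(2) gives $\rho(K_{(P,\gamma,w)}) = \sum_s K_{(s^*,\gamma^*,w)}$, and reindexing via $s \mapsto s^*$ shows this equals $\sum_{s'}K_{(s',\gamma^*,w)}$ over linear extensions $s'$ of $P^*$, which is $K_{(P^*,\gamma^*,w)}$ by Lemma~\ref{lem:P-part}. Part (3) is identical except with the sign $(-1)^{w(s)-|s|} = (-1)^{w(P)-|P|}$ from Lemma~\ref{lem:chaininv}(3) pulled out exactly as in (1).

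There is no real obstacle here beyond checking two things carefully: that the sign $(-1)^{w(s)-|s|}$ is constant across linear extensions (which it is, since both $w$ and $|\cdot|$ only see the underlying set), and that the set of linear extensions of $P$ is unchanged when one replaces $\gamma$ by $\gamma^*$ (which it is, since the notion of linear extension depends only on the order on $P$, not on the labelling). Given these, the argument is a routine linearity/reindexing calculation, and no new combinatorics beyond Lemmas~\ref{lem:P-part} and~\ref{lem:chaininv} is needed.
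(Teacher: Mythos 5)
Your proposal is correct and follows essentially the same route as the paper's proof: decompose via the fundamental lemma of $P$-partitions, apply Lemma~\ref{lem:chaininv} termwise, pull out the constant sign $(-1)^{w(P)-|P|}$, and for (2) and (3) use the bijection $s\mapsto s^*$ between linear extensions of $P$ and of $P^*$. The extra care you take in checking that the sign is constant across linear extensions and that linear extensions are independent of the labelling is sound but matches what the paper leaves implicit.
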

\begin{proof}
By Lemma~\ref{lem:P-part},
$$K_{(P,\gamma,w)}=\sum_s K_{(s,\gamma,w)},$$
where the sum is over all linear extensions $s$ of $P$. Therefore, by applying Lemma~\ref{lem:chaininv},
$$\psi(K_{(P,\gamma,w)})=(-1)^{w(P)-|P|} \sum_s K_{(s,\gamma^*,w)}= (-1)^{w(P)-|P|} K_{(P,\gamma^*,w)},$$
with the right equality again by Lemma~\ref{lem:P-part}, proving (1). The proofs of (2) and (3) are similar, noting that the duals $s^*$ of the linear extensions $s$ of $P$ are exactly the linear extensions of the dual poset $P^*$.
\end{proof}

\begin{remark}
Grinberg in \cite{Grinberg} studied generating functions of $\mathbf E$-partitions associated \FA{with} a weighted double poset, which generalise the functions $K_{(P,\gamma,w)}$. Since the antipode on $\QSym$ differs from $\omega:\QSym\to\QSym$ by a sign on each graded {component}, Proposition~\ref{prop:posetinv} (3) may be deduced from \tcr{\cite[Theorem 8]{Grinberg},} which computes the antipode of $K_{(P,\gamma,w)}$ in a special case. Because we study specifically $P$-partitions associated \FA{with} a labelled poset, we obtain a simpler proof by employing the fundamental lemma of $P$-partitions.
\end{remark}

\section{Quasisymmetric and noncommutative power sums}\label{sec:qsymnsym}
In this section we investigate what properties should be satisfied by quasisymmetric and noncommutative analogues of power sums. Motivated by the relationship $\langle p_\lambda,p_\mu\rangle = z_\lambda \delta_{\lambda\mu}$ for symmetric power sums, we will study the consequences of a similar relationship between quasisymmetric and {noncommutative} power sums. Given bases $\{P_\alpha\}_{\alpha\vDash n\ge 0}$ and $\{\mathbf p_\alpha\}_{\vDash n\ge 0}$ of $\QSym$ and $\NSym$, respectively, related by $\langle P_\alpha,\mathbf p_\beta\rangle = z_\alpha \delta_{\alpha\beta}$, we should expect that $\{P_\alpha\}_{\alpha\vDash n\ge 0}$ is a good quasisymmetric analogue of \tcr{the $p$-basis} exactly whenever $\{\mathbf p_\alpha\}_{\alpha\vDash n\ge 0}$ is a good {noncommutative} analogue of \tcr{the $p$-basis.}

The following proposition describes the relationship between the properties satisfied by a quasisymmetric power sum basis and the properties satisfied by a {noncommutative} power sum basis when the two bases are scaled duals of one another.

\begin{proposition}\label{prop:dual}
Let $\{P_{\alpha}\}_{\alpha\vDash n\ge 0}$ and $\{\mathbf p_\alpha\}_{\alpha\vDash n\ge 0}$ be bases for $\QSym$ and $\NSym$, respectively, related by $\langle P_\alpha, \mathbf p_\beta\rangle = z_\alpha\delta_{\alpha\beta}$. Then
\begin{enumerate}
    \item $\sum_{\widetilde{\alpha}=\lambda}P_\alpha = p_\lambda$ for all partitions $\lambda$ if and only if $\chi(\mathbf p_\alpha) = p_{\widetilde{\alpha}}$ for all compositions $\alpha$,
    \item $P_\alpha P_\beta = \frac{z_\alpha z_\beta}{z_{\alpha\cdot\beta}}\sum_{\gamma\in\alpha\shuffle\beta}P_\gamma$ for all compositions $\alpha,\beta$ if and only if \tcr{$\Delta(\mathbf p_\gamma)=\sum_{{\gamma\in\alpha\shuffle\beta}}\mathbf p_\alpha\otimes\mathbf p_\beta$ for all compositions $\gamma$,}
    \item $\Delta(P_\alpha)=\sum_{\alpha=\beta\cdot\gamma}\frac{z_\alpha}{z_\beta z_\gamma}P_\beta\otimes P_\gamma$ for all compositions $\alpha$ if and only if \tcr{$\mathbf p_{\beta}\mathbf p_\gamma=\mathbf p_{\beta\cdot\gamma}$ for all compositions $\beta,\gamma$,}
    \item $\omega(P_\alpha)=(-1)^{|\alpha|-\ell(\alpha)}P_{\alpha^r}$ for all compositions $\alpha$ if and only if $\omega(\mathbf p_\alpha)=(-1)^{|\alpha|-\ell(\alpha)}\mathbf p_{\alpha^r}$ for all compositions $\alpha$,
    \item $[M_\beta]P_\alpha\ge 0$ for all compositions $\alpha,\beta$ if and only if \tcr{$[\mathbf p_\alpha]\mathbf h_\beta\ge 0$} for all compositions $\alpha,\beta$,
    \item $[M_\beta]P_\alpha=0$ for all compositions $\alpha\not\preccurlyeq\beta$ if and only if \tcr{$[\mathbf p_\alpha]\mathbf h_\beta = 0$} for all compositions \tcr{$\alpha\not\preccurlyeq\beta$.}
\end{enumerate}
\end{proposition}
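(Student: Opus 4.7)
The plan is to derive all six equivalences from a single common principle: the scaled pairing $\langle P_\alpha, \mathbf{p}_\beta\rangle = z_\alpha \delta_{\alpha\beta}$ is nondegenerate, so any identity in $\QSym$ or $\NSym$ is equivalent to the equality of its pairings against a basis of the opposite side. For each of the six parts I would translate the proposed identity on one side into a formula for the relevant pairing and match it with the pairing computed assuming the proposed identity on the other side; nondegeneracy then gives the equivalence automatically in both directions.

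For part (1), the key tool is the adjointness $\langle \iota(f), g\rangle = \langle f, \chi(g)\rangle$ defining $\chi$. Expanding $\chi(\mathbf{p}_\beta) = \sum_\mu a_{\beta\mu} p_\mu$ and using $\langle p_\mu, p_\nu\rangle_\Sym = z_\mu \delta_{\mu\nu}$ gives $\langle p_\mu, \chi(\mathbf{p}_\beta)\rangle = z_\mu a_{\beta\mu}$, while the proposed quasisymmetric identity yields $\langle \iota(p_\mu), \mathbf{p}_\beta\rangle = z_\beta \, [\widetilde\beta = \mu]$; since $z_\beta = z_{\widetilde\beta}$, this forces $a_{\beta\mu} = \delta_{\widetilde\beta,\mu}$, and the argument runs in both directions. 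Part (4) is analogous: the defining adjointness $\langle \omega(f), g\rangle = \langle f, \omega(g)\rangle$ translates $\omega(P_\alpha) = (-1)^{|\alpha|-\ell(\alpha)} P_{\alpha^r}$ directly into $\omega(\mathbf{p}_\beta) = (-1)^{|\beta|-\ell(\beta)} \mathbf{p}_{\beta^r}$ via a single pairing computation, using $|\alpha^r| = |\alpha|$ and $\ell(\alpha^r) = \ell(\alpha)$.

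For parts (2) and (3) I would use the Hopf-duality relation $\langle fg, h\rangle = \langle f\otimes g, \Delta(h)\rangle$ that holds because $\NSym$ is the graded Hopf dual of $\QSym$. Substituting the shuffle product rule for $P_\alpha P_\beta$ on the left and the shuffle coproduct rule for $\Delta(\mathbf{p}_\gamma)$ on the right will each yield the pairing $z_\alpha z_\beta \, [\gamma \in \alpha \shuffle \beta]$; the observation that makes the scalars match cleanly is that every $\delta \in \alpha \shuffle \beta$ has $\widetilde\delta = \widetilde{\alpha\cdot\beta}$, so $z_\delta = z_{\alpha\cdot\beta}$, and the prefactor $z_\alpha z_\beta/z_{\alpha\cdot\beta}$ in the quasisymmetric product cancels against the $z_\delta$ produced by the pairing. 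Part (3) is the mirror image: pairing $\Delta(P_\alpha)$ against $\mathbf{p}_\beta \otimes \mathbf{p}_\gamma$ gives a value that matches the pairing $\langle P_\alpha, \mathbf{p}_\beta\mathbf{p}_\gamma\rangle$ exactly when $\mathbf{p}_\beta \mathbf{p}_\gamma = \mathbf{p}_{\beta\cdot\gamma}$, provided the $\frac{z_\alpha}{z_\beta z_\gamma}$ factor is present on the quasisymmetric side.

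For parts (5) and (6), I would use the duality $\langle M_\alpha, \mathbf{h}_\beta\rangle = \delta_{\alpha\beta}$. Writing $P_\alpha = \sum_\beta c_{\alpha\beta} M_\beta$ and $\mathbf{h}_\beta = \sum_\gamma d_{\beta\gamma} \mathbf{p}_\gamma$, and computing $\langle P_\alpha, \mathbf{h}_\beta\rangle$ in two ways, yields the key identity $[M_\beta] P_\alpha = c_{\alpha\beta} = z_\alpha d_{\beta\alpha} = z_\alpha [\mathbf{p}_\alpha] \mathbf{h}_\beta$. Since $z_\alpha > 0$, signs and vanishings are preserved under this correspondence, which immediately gives (5); for (6), swapping the roles of $\alpha$ and $\beta$ in the identity turns the condition $\alpha \not\preccurlyeq \beta$ on the quasisymmetric side into $\beta \not\preccurlyeq \alpha$, equivalently $\alpha \not\succcurlyeq \beta$, on the noncommutative side. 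The main technical obstacle is the $z$-factor bookkeeping in parts (2) and (3); once the equality $z_\delta = z_{\alpha\cdot\beta}$ for $\delta\in\alpha\shuffle\beta$ is established, each equivalence reduces to a routine application of Hopf duality.
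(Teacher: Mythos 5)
Your proposal is correct and follows essentially the same route as the paper: both arguments extract coefficients via the scaled duality pairing, use the defining adjointness relations for $\chi$ and $\omega$ and the Hopf-duality of product against coproduct for (2) and (3), note that $z_\gamma=z_{\alpha\cdot\beta}$ for $\gamma\in\alpha\shuffle\beta$ and $z_{\alpha^r}=z_\alpha$, and reduce (5) and (6) to the identity $[M_\beta]P_\alpha=z_\alpha[\mathbf p_\alpha]\mathbf h_\beta$. No substantive differences.
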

\begin{proof}
To prove (1), note that \tcr{$$[P_{\alpha}]p_\lambda = \frac{1}{z_\alpha}\langle \iota(p_\lambda), \mathbf p_\alpha\rangle = \frac{1}{z_\alpha}\langle p_\lambda,\chi(\mathbf p_\alpha)\rangle=\frac{z_\lambda}{z_\alpha}[p_\lambda]\chi(\mathbf p_\alpha),$$}and both conditions \tcr{in (1)} are equivalent to having this quantity \tcr{above} equal $\delta_{\widetilde{\alpha}\lambda}$ for all compositions $\alpha$ and all partitions $\lambda$. For (2), we apply the equality $$[P_\gamma]P_\alpha P_\beta = \frac{1}{z_\gamma}\langle P_\alpha P_\beta, \mathbf p_\gamma\rangle =\frac{1}{z_\gamma}\langle P_\alpha\otimes P_\beta, \Delta(\mathbf p_\gamma)\rangle = \frac{z_\alpha z_\beta}{z_\gamma} \tcr{[\mathbf p_\alpha \otimes \mathbf p_\beta]} \Delta(\mathbf p_\gamma),$$ and note for $\gamma\in \alpha\shuffle\beta$ that $z_\gamma=z_{\alpha\cdot\beta}$ since $\gamma$ and $\alpha\cdot\beta$ have the same parts. Similarly, for (3) we apply the equality \tcr{$$[P_\beta\otimes P_\gamma] \Delta(P_\alpha) = \frac{1}{z_\beta z_\gamma}\langle \Delta(P_\alpha), \mathbf p_\beta\otimes \mathbf p_\gamma\rangle = \frac{1}{z_\beta z_\gamma}\langle P_\alpha,\mathbf p_\beta\mathbf p_\gamma\rangle = \frac{z_\alpha}{z_\beta z_\gamma}[\mathbf p_\alpha] \mathbf p_\beta\mathbf p_\gamma .$$} We deduce (4) from the equality \tcr{$$[P_\beta]\omega(P_\alpha) = \frac{1}{z_\beta}\langle \omega(P_\alpha), \mathbf p_\beta\rangle = \frac{1}{z_\beta}\langle P_\alpha, \omega(\mathbf p_\beta)\rangle =\frac{z_\alpha}{z_\beta} [\mathbf p_\alpha] \omega(\mathbf p_\beta)$$}and use that $z_{\alpha^r}=z_\alpha$. Finally, (5) and (6) follow from the fact that $$\VW{[M_\beta]P_\alpha = \langle P_\alpha, \mathbf h_\beta\rangle =z_\alpha [\mathbf p_\alpha] \mathbf h_\beta.}$$
\end{proof}

\begin{remark}\label{rem:types}
The bases of type 1 quasisymmetric power sums $\{\Psi_\alpha\}_{\alpha\vDash n\ge 0}$ and type 2 quasisymmetric power sums $\{\Phi_\alpha\}_{\alpha\vDash n\ge 0}$, introduced by Ballantine, Daugherty, Hicks, Mason and Niese \cite{BDHMN} as the scaled duals of the noncommutative power sums of the first and second kind \cite{Gelfandetal}, \VW{were shown in the same paper to satisfy all of properties (1), (2), (4), (5) and (6). Property (3) for the scaled duals of the type 1 and 2 quasisymmetric power sums appears in \cite{Gelfandetal}.}
\end{remark}

Properties (1) to (6) generalise properties of the \tcr{$p$-basis}  to quasisymmetric and {noncommutative symmetric} functions. The first property allows the quasisymmetric functions $\{P_\alpha\}_{\alpha\vDash n\ge 0}$ to naturally refine the power sum symmetric functions.

The \tcr{$p$-basis} $\{p_\lambda\}_{\lambda\vdash n\ge 0}$ is the basis of $\Sym$ obtained by taking all finite products of the $\{p_n\}_{n\ge 1}$, where each $p_n$ is primitive, meaning $\Delta(p_n)=1\otimes p_n + p_n\otimes 1$, and is an eigenvector of $\omega:\Sym\to\Sym$ with eigenvalue $(-1)^{n-1}$. Property (3) makes $\{\mathbf p_\alpha\}_{\alpha\vDash n\ge 0}$ the basis of $\NSym$ obtained by taking all finite products of the $\{\mathbf p_n\}_{n\ge 1}$. Assuming \VW{property (3)}, property (2) is {equivalent to} each $\mathbf p_n$ being primitive, since then each $\Delta(\mathbf p_n)=\mathbf p_\emptyset\otimes\mathbf p_n + \mathbf p_n\otimes \mathbf p_\emptyset = 1\otimes\mathbf p_n+\mathbf p_n\otimes 1$, and the coproduct on $\mathbf p_\alpha$ can then be found by using the property that the coproduct of a Hopf algebra is an algebra morphism. \VW{Moreover, assuming} property (3), property (4) is equivalent to each $\mathbf p_n$ being an eigenvector of $\omega:\NSym\to\NSym$ with eigenvalue $(-1)^{n-1}$, since $\omega:\NSym\to\NSym$ is an antiautomorphism.

Property (5) generalises the \tcr{positivity in the $m$-basis of the power sum symmetric functions to the quasisymmetric monomial basis} and power sums, and property (6) makes the indexing of the quasisymmetric power sums $\{P_\alpha\}_{\alpha\vDash n\ge 0}$ by compositions natural. So we expect a good quasisymmetric analogue of power sum \tcr{symmetric} functions to satisfy these six properties. In the next section, we will introduce the basis $\{\p_\alpha\}_{\alpha\vDash n\ge 0}$ of combinatorial power sums for $\QSym$, for which we will show these properties hold.

\section{The basis of combinatorial power sums}\label{sec:power}

We will define \VW{two new bases} of $\QSym$ naturally refining the power sum symmetric functions. \VW{The same bases were independently discovered by Lazzeroni \cite{Lazz}, who gave a different definition.} Our definition is motivated by the observation that the power sum symmetric function $p_\lambda$ is the generating function of the $P$-partitions associated \FA{with} a certain weighted labelled poset.

First, let $\mathbb N_{\mathbb N}$ be the set $\{a_k:a,k\in\mathbb N\}$, equipped with the order
$$1_1<1_2<\cdots<2_1<2_2<\cdots<3_1<3_2<\cdots.$$ The elements of $\mathbb N_{\mathbb N}$ are \textit{labelled positive integers}.
The dual chain $\mathbb N_{\mathbb N}^*$ is given by
$$1_1>1_2>\cdots>2_1>2_2>\cdots>3_1>3_2>\cdots.$$

Given a partition $\lambda=n^{r_n}\cdots 1^{r_1}$, let $P^{\lambda}$ be the poset with no relations and underlying set $\{n_{r_n},\dots,n_{1},\dots, 1_{r_1},\dots,1_1\}$. Now let 
\tcr{$$\gamma:P^{\lambda}\to \mathbb N_{\mathbb N}^*$$}be the natural inclusion and let $w:P^\lambda\to\mathbb N$ be the weight function given by $w(a_k)=a$. \tcr{We will see the reason for choosing the dual chain in Remark~\ref{rem:dualchain}.} Then, since $P^\lambda$ has no relations, $\mathscr O(P^{\lambda},\gamma)$ simply consists of all maps $f:P^{\lambda}\to\mathbb N$. Therefore,
$$K_{(P^{\lambda},\gamma,w)}=\sum_{f}x_{f(n_{r_n})}^n\cdots x_{f(n_1)}^n\cdots x_{f(1_{r_1})}\cdots x_{f(1_1)}=p_\lambda,$$
where the sum is \tcr{over} all maps $f:P^{\lambda}\to\mathbb N$. Applying Lemma~\ref{lem:P-part}, the fundamental lemma of $P$-partitions, to $K_{(P^{\lambda},\gamma,w)}$ then gives a decomposition into quasisymmetric functions, which we will use to define our new basis of quasisymmetric power sums.

Before defining our basis, we will introduce some more notation. A \textit{labelled composition} $\alpha_*$ is a finite ordered list of distinct labelled positive integers. We will write $(\alpha_*)$ to denote the composition obtained by removing subscripts. For $i\in\mathbb N$, we write $\ind_i(\alpha_*)$ to denote the set of indices $k$ for all $i_k$ appearing in $\alpha_*$. For example, when $\alpha_*=1_31_12_3$, we have $(\alpha_*)=112$ and $\ind_1(\alpha_*)=\{1,3\}$. Labelled compositions provide succinct notation to describe finite chains on labelled positive integers.

We will write $K_{\alpha_*}$ for a labelled composition $\alpha_*$ to denote $K_{(s,\gamma,w)}$, where $\alpha_*$ lists the elements of  {a} chain $s$ in ascending order, $\gamma$ is the natural inclusion into $\mathbb N_{\mathbb N}^*$ and the weight function $w$ is given by $w(a_k)=a$. Note $\alpha(s,w)=(\alpha_*)$.
 We will also define $K^r_{\alpha_*} = K_{(s,\gamma^*,w)}$. We are now ready to define the elements of our new basis.

\begin{definition}\label{def:cpow}
The \textit{combinatorial power sum} quasisymmetric function indexed by a composition $\alpha$ with $\widetilde\alpha=n^{r_n}\cdots 1^{r_1}$ is
$$\p_\alpha= \sum_{\alpha(s,w)=\alpha}K_{(s,\gamma,w)}=\sum_{\substack{(\alpha_*)=\alpha\\ \ind_i(\alpha_*)=[r_i],  {i\in\mathbb N}}} K_{\alpha_*},$$
where the first sum is over all linear extensions $s$ of $P^{\widetilde\alpha}$ satisfying $\alpha(s,w)=\alpha$. 
\end{definition}

\begin{example}
When $\alpha=112$, the linear extensions $s$ of $P^{211}$ satisfying $\alpha(s,w)=112$ are given by $1_1<1_2<2_1$ and $1_2<1_1<2_1$, and so
$$\p_{112} = K_{1_11_22_1} + K_{1_2 1_12_1}= 2M_{112}+M_{22},$$
since by Lemma~\ref{lem:chaintoM}, $K_{1_11_22_1}=M_{112}$ and $K_{1_21_12_1}=M_{112}+M_{22}$. \tcr{Similarly, we can compute the following.
$$\p_{4}= \FA{K_{4_1}=}M_4\qquad \p _{31}=\FA{K_{3_11_1}=} M_4 + M_{31}\qquad \p _{13}=\FA{K_{1_13_1}=} M_{13}$$ $$\p _{22}=\FA{K_{2_12_2}+K_{2_22_1}=} M_4 + 2M_{22}
\qquad \p _{211}=\FA{K_{2_11_11_2}+K_{2_11_21_1}=} M_4+ 2M_{31} + M_{22} + 2 M_{211}$$$$ \p _{121}=\FA{K_{1_12_11_2}+K_{1_22_11_1}=}  2M _{13} + 2M_{121}\qquad \p _{112}=\FA{K_{1_11_22_1}+K_{1_21_12_1}=}  M_{22} + 2M_{112}$$
$$\p_{1111}=\FA{\sum K_{1_i1_j1_k1_l}=}  M_4+ 4M_{31} + 4M_{13} + 6M_{22} + 12M_{211}  +12 M_{121} + 12 M_{112} + 24 M_{1111}$$} \FA{where the sum runs over all choices of distinct $i,j,k,l\in[4]$.}
\end{example}
There is a second family of quasisymmetric functions \tcr{that} can be defined similarly.

\begin{definition}\label{def:rcpow}
The \textit{reverse combinatorial power sum} quasisymmetric function indexed by a composition $\alpha$ with $\widetilde\alpha=n^{r_n}\cdots 1^{r_1}$ is
$$\p^r_\alpha= \sum_{\alpha(s,w)=\alpha}K_{(s,\gamma^*,w)}=\sum_{\substack{(\alpha_*)=\alpha\\ \ind_i(\alpha_*)=[r_i],  {i\in\mathbb N}}} K^r_{\alpha_*},$$
where the first sum is over all linear extensions $s$ of $P^{\widetilde\alpha}$ satisfying $\alpha(s,w)=\alpha$. 
\end{definition}

By Lemma~\ref{lem:chaintoM}, it follows that each $\p_\alpha$ expands with nonnegative integer coefficients in the monomial basis of $\QSym$, with $[M_\alpha]\p_\alpha>0$. By an upper-triangularity argument against the monomial basis, $\{\p_\alpha\}_{\alpha\vDash n\ge 0}$ is a basis for $\QSym$. Note that the basis $\{\p_\alpha\}_{\alpha\vDash n\ge 0}$ is distinct from the bases $\{\Psi_{\alpha}\}_{\alpha\vDash n\ge 0}$ and $\{\Phi_\alpha\}_{\alpha\vDash n\ge 0}$ of quasisymmetric power sums introduced in \cite{BDHMN}, since although they expand into the monomial basis with nonnegative coefficients, those coefficients are not always \tcr{integers.}

Since
$$p_\lambda = K_{(P^\lambda,\gamma,w)} = \sum_{s}K_{(s,\gamma,w)},$$
where the sum is over all linear extensions $s$ of $P^\lambda$ and since the parts of each $\alpha(s,w)$ are exactly the weights of the elements of $P^\lambda$, we have the following theorem immediately by the definition of the $\{\p_\alpha\}_{\alpha\vDash n\ge 0}$, \tcr{and the above discussion.}

\begin{theorem}\label{the:symptoqsymp}
Let $\lambda$ be a partition. Then
$$p_\lambda = \sum_{\widetilde{\alpha}=\lambda} \p_\alpha.$$\tcr{Moreover, the set $\{ \p _\alpha \} _{\alpha \vDash n \geq 0}$ is a basis for $\QSym$.}
\end{theorem}
\begin{example}
$$p_{211}= \p _{211}+\p _{121} + \p_{112}$$
{Note that the expansion into monomial quasisymmetric functions of the right side agrees with the expansion into monomial symmetric functions of Example~\ref{ex:ptomsym}.}
\end{example}
One useful observation is the following. If $\alpha_*,\beta_*$ are two labelled compositions satisfying $(\alpha_*)=(\beta_*)$  {such} that the indices associated \FA{with} {each} unlabelled positive integer $i$ are in the same relative order in $\alpha_*$ and $\beta_*$, {then $K_{\alpha_*}=K_{\beta_*}$}. For example, $K_{1_21_12_1}=K_{1_31_12_3}$, since $(1_21_12_1)=(1_31_12_3)=112$, and the indices of $1_21_1$ are in the same relative order as those of $1_31_1$. This is because there is an order-preserving bijection between the associated weighted labelled chains that also preserves weights and relative labelling. Therefore, for any composition $\alpha$ with $\widetilde\alpha=n^{r_n}\cdots 1^{r_1}$, we also have
$$\p_\alpha = \sum_{\substack{(\alpha_*)=\alpha\\ \ind_i(\alpha_*)=[r_i],  {i\in\mathbb N}}}K_{\alpha_*} = \sum_{\substack{(\alpha_*)=\alpha\\ \ind_i(\alpha_*)=R_i,  {i\in\mathbb N}}}K_{\alpha_*}$$ for any choice of the $R_i\subset \mathbb N$ of size $r_i$.

We next prove a shuffle product rule for the basis of combinatorial power sums, \tcr{where we extend the notion of shuffles to two labelled compositions $\alpha_*$ and $\beta_*$ with no parts in common in the natural way.}

\begin{theorem}\label{the:pprod}
Let $\alpha,\beta$ be compositions. Then
$$\p_\alpha \p_\beta = \frac{z_\alpha z_\beta}{z_{\alpha\cdot\beta}}\sum_{\gamma\in\alpha\shuffle\beta}\p_\gamma.$$
\end{theorem}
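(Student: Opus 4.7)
The plan is to unfold $\p_\alpha\p_\beta$ combinatorially using Proposition~\ref{prop:Kprod} and Lemma~\ref{lem:P-part}, then to extract the scalar $\frac{z_\alpha z_\beta}{z_{\alpha\cdot\beta}}$ by averaging over choices of label sets. We write $r_i^\alpha, r_i^\beta, r_i$ for the multiplicities of $i$ in $\alpha,\beta,\alpha\cdot\beta$, so that $r_i = r_i^\alpha + r_i^\beta$. For any disjoint label sets $R_i^\alpha, R_i^\beta \subset \mathbb N$ of sizes $r_i^\alpha, r_i^\beta$, the observation preceding the theorem lets us write $\p_\alpha = \sum_{\alpha_*} K_{\alpha_*}$ over labelled compositions with $(\alpha_*) = \alpha$ and $\ind_i(\alpha_*) = R_i^\alpha$, and similarly for $\p_\beta$.

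First, we would multiply and apply Proposition~\ref{prop:Kprod} (legitimate by disjointness of labels) followed by Lemma~\ref{lem:P-part}, obtaining
\[
\p_\alpha \p_\beta = \sum_{\alpha_*,\beta_*}\sum_{\gamma_* \in \alpha_*\shuffle\beta_*} K_{\gamma_*},
\]
after identifying linear extensions of a disjoint union of chains with labelled shuffles. Since the labels of $\alpha_*$ and $\beta_*$ are disjoint, each $\gamma_*$ uniquely recovers its decomposition $(\alpha_*,\beta_*)$ from its labels, so this collapses to $\p_\alpha\p_\beta = \sum_{\gamma_* \in T} K_{\gamma_*}$, where $T$ is the set of labelled $\gamma_*$ with $\ind_i(\gamma_*) = R_i^\alpha \cup R_i^\beta$ whose $R_i^\alpha$-labelled (resp.\ $R_i^\beta$-labelled) letters spell $\alpha$ (resp.\ $\beta$).

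Next, we would specialize to $R_i^\alpha \cup R_i^\beta = [r_i]$ and average over the choice of split: since the identity above holds for every $R_i^\alpha \subseteq [r_i]$ of size $r_i^\alpha$, summing over the $\prod_i \binom{r_i}{r_i^\alpha}$ such choices yields
\[
\Bigl(\prod_i \binom{r_i}{r_i^\alpha}\Bigr)\p_\alpha\p_\beta = \sum_{\gamma_*} c(\gamma_*)\,K_{\gamma_*},
\]
summed over labelled $\gamma_*$ with $\ind_i(\gamma_*) = [r_i]$, where $c(\gamma_*)$ counts the splits for which $\gamma_*\in T$. For fixed $\gamma_*$, a valid split corresponds bijectively to a partition of the positions of $(\gamma_*) = \gamma$ into subwords spelling $\alpha$ and $\beta$: a position-partition determines $R_i^\alpha$ as the labels carried by its $\alpha$-weight-$i$ positions, and conversely each split reconstructs a unique partition via its labels. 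Because the labels of $\gamma_*$ at positions of equal weight are distinct, different position-partitions yield different splits, so $c(\gamma_*) = m_\gamma$, the multiplicity of $\gamma$ in $\alpha\shuffle\beta$. Grouping the right-hand side by $\gamma = (\gamma_*)$ then gives $\sum_\gamma m_\gamma\p_\gamma = \sum_{\gamma\in\alpha\shuffle\beta}\p_\gamma$, and dividing by $\prod_i \binom{r_i}{r_i^\alpha}$ yields the claim via the identity $\prod_i\binom{r_i}{r_i^\alpha}^{-1} = \prod_i \frac{r_i^\alpha!\,r_i^\beta!}{r_i!} = \frac{z_\alpha z_\beta}{z_{\alpha\cdot\beta}}$ (the factors $i^{r_i}$ in the $z$'s cancel). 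The main obstacle will be verifying the count $c(\gamma_*) = m_\gamma$, which relies on the within-weight distinctness of labels in $\gamma_*$.
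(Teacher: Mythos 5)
Your proposal is correct and follows essentially the same route as the paper's proof: both multiply by $\frac{z_{\alpha\cdot\beta}}{z_\alpha z_\beta}=\prod_i\binom{r_i}{r_i^\alpha}$ to sum over all splits of the label sets, apply Proposition~\ref{prop:Kprod} together with Lemma~\ref{lem:P-part} to turn each product $K_{\alpha_*}K_{\beta_*}$ into a sum over labelled shuffles, and then re-index by $\gamma_*$ to recover $\sum_{\gamma\in\alpha\shuffle\beta}\p_\gamma$. The only difference is presentational: you make explicit the counting step $c(\gamma_*)=m_\gamma$ via the bijection with position-partitions, which the paper absorbs into its one-line re-indexing.
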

\tcr{\begin{example}\label{ex:prodp}
$$\p _{12}\p _1 = \frac{2}{4}\left(\p _{121} + \p _{112} + \p _{112} \right) = \frac{1}{2} \p _{121} + \p _{112}$$
\end{example}}
\begin{proof}
Let $\widetilde\alpha=n^{r_n}\cdots 1^{r_1}$ and $\widetilde\beta=m^{t_m}\cdots 1^{t_1}$. Note as in \cite[\tcr{Notation 3.15}]{BDHMN}, $\frac{z_{\alpha\cdot\beta}}{z_\alpha z_\beta} = \prod_{i} {r_i+t_i\choose r_i}$. Thus
\begin{align*}
\frac{z_{\alpha\cdot\beta}}{z_\alpha z_\beta} \p_\alpha\p_\beta&= \sum_{ {(R_1, R_2, \ldots )\in \prod_{i} {[r_i+t_i]\choose r_i}}} \left(\sum_{\substack{(\alpha_*)=\alpha\\ \ind_i(\alpha_*)=R_i,  {i\in\mathbb N}}}K_{\alpha_*}\right)\left(\sum_{\substack{(\beta_*)=\beta\\ \ind_i(\beta_*)=[r_i+t_i]\setminus R_i,  {i\in\mathbb N}}}K_{\beta_*}\right).
\end{align*}

When $\alpha_*$ and $\beta_*$ have no parts in common, the product $K_{\alpha_*}K_{\beta_*}$ is the generating function of the $P$-partitions of the disjoint union of the weighted labelled chains associated \FA{with} $\alpha_*$ and $\beta_*$, by Proposition~\ref{prop:Kprod}. The labelling on the disjoint union is the natural inclusion into $\mathbb N_{\mathbb N}^*$. By Lemma~\ref{lem:P-part}, the fundamental lemma of $P$-partitions, this generating function may be written as a sum over linear extensions, where the linear extensions of the disjoint union are exactly the chains indexed by the shuffles of $\alpha_*$ and $\beta_*$.

Hence,
\begin{align*}
\frac{z_{\alpha\cdot\beta}}{z_\alpha z_\beta} \p_\alpha\p_\beta&= \sum_{ {(R_1, R_2, \ldots )\in \prod_{i} {[r_i+t_i]\choose r_i}}}\sum_{\substack{(\alpha_*)=\alpha,\,(\beta_*)=\beta\\ \ind_i(\alpha_*)=R_i,\,\ind_i(\beta_*) = [r_i+t_i]\setminus R_i,  {i\in\mathbb N}}}\sum_{\gamma_* \in \alpha_*\shuffle\beta_*} K_{\gamma_*}\\
&=\sum_{\gamma\in\alpha\shuffle\beta} \sum_{\substack{(\gamma_*)=\gamma\\ \ind_i(\gamma_*)=[r_i+t_i],  {i\in\mathbb N}}}K_{\gamma_*} = \sum_{\gamma\in\alpha\shuffle\beta} \p_\gamma,
\end{align*}
proving the desired product rule.
\end{proof}

Our next result is a deconcatenation coproduct rule for our basis. When $\alpha_*,\beta_*$ are two labelled compositions with no parts in common, we let $\alpha_*\cdot\beta_*$ denote their concatenation.

\begin{theorem}\label{the:pcop}
Let $\alpha$ be a composition. Then
$$\Delta(\p_\alpha)=\sum_{\alpha=\beta\cdot\gamma}\frac{z_\alpha}{z_\beta z_\gamma}\p_\beta\otimes\p_\gamma.$$
\end{theorem}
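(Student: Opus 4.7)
The plan is to mirror the proof of Theorem~\ref{the:pprod}: expand $\p_\alpha$ as a sum of chain generating functions $K_{\alpha_*}$, apply the chain-level coproduct rule from Proposition~\ref{prop:Kcop}, and regroup.

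First, I would specialize Proposition~\ref{prop:Kcop} to a labelled chain. If $s$ is a chain given by $u_1<\cdots<u_\ell$, then its lower sets are precisely the prefixes $\{u_1,\dots,u_k\}$ for $0\le k\le \ell$. Hence, for any labelled composition $\alpha_*$,
$$\Delta(K_{\alpha_*})=\sum_{\alpha_*=\beta_*\cdot\gamma_*}K_{\beta_*}\otimes K_{\gamma_*},$$
where the sum is over deconcatenations of $\alpha_*$ into a prefix $\beta_*$ and a suffix $\gamma_*$.

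Next, I would apply this to Definition~\ref{def:cpow}. Writing $\widetilde\alpha=n^{r_n}\cdots 1^{r_1}$, we have
$$\Delta(\p_\alpha)=\sum_{\substack{(\alpha_*)=\alpha\\ \ind_i(\alpha_*)=[r_i]}}\sum_{\alpha_*=\beta_*\cdot\gamma_*}K_{\beta_*}\otimes K_{\gamma_*}.$$
I would reindex by the unlabelled pair $\beta=(\beta_*)$, $\gamma=(\gamma_*)$ with $\alpha=\beta\cdot\gamma$. For fixed $\beta,\gamma$ with $\widetilde\beta=n^{s_n}\cdots 1^{s_1}$ and $\widetilde\gamma=n^{t_n}\cdots 1^{t_1}$ (so $s_i+t_i=r_i$), the inner contribution splits according to how each $[r_i]$ is partitioned between $\ind_i(\beta_*)$ and $\ind_i(\gamma_*)$. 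For each choice of subsets $R_i\subseteq[r_i]$ with $|R_i|=s_i$, the observation immediately following Definition~\ref{def:rcpow} (that indices may be replaced by any sets of the same size) yields
$$\sum_{\substack{(\beta_*)=\beta\\ \ind_i(\beta_*)=R_i}}K_{\beta_*}\,\otimes\sum_{\substack{(\gamma_*)=\gamma\\ \ind_i(\gamma_*)=[r_i]\setminus R_i}}K_{\gamma_*}=\p_\beta\otimes\p_\gamma.$$

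Finally, the number of such tuples $(R_i)$ is $\prod_i\binom{r_i}{s_i}$, and this equals $z_\alpha/(z_\beta z_\gamma)$: since $z_\alpha=\prod_i i^{r_i}r_i!$ and $r_i=s_i+t_i$,
$$\frac{z_\alpha}{z_\beta z_\gamma}=\prod_i\frac{i^{r_i}r_i!}{i^{s_i}s_i!\cdot i^{t_i}t_i!}=\prod_i\binom{r_i}{s_i}.$$
Combining gives the claimed formula. The main obstacle is purely bookkeeping, namely tracking how the index sets $[r_i]$ distribute between the prefix and suffix and verifying the binomial coefficient rewrites as $z_\alpha/(z_\beta z_\gamma)$; once the observation after Definition~\ref{def:rcpow} is invoked, no further combinatorial work is required.
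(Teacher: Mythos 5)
Your proposal is correct and follows essentially the same route as the paper's proof: expand $\p_\alpha$ into the $K_{\alpha_*}$, apply Proposition~\ref{prop:Kcop} (whose lower sets for a chain are exactly the prefixes, giving the deconcatenation rule), regroup by the unlabelled deconcatenation and the choice of index subsets, and identify the multiplicity $\prod_i\binom{r_i}{s_i}$ with $z_\alpha/(z_\beta z_\gamma)$. The only difference is cosmetic: you verify the binomial identity directly where the paper cites \cite[Notation 5.2]{BDHMN}.
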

\tcr{\begin{example}\label{ex:coprodp}
$$\Delta (\p _{121}) = 1\otimes \p _{121} + 2 \p_1 \otimes \p_{21} + 2 \p _{12} \otimes \p_1 + \p _{121} \otimes 1$$
\end{example}}
\begin{proof}
Let $\widetilde\alpha=n^{r_n}\cdots 1^{r_1}$. Then
\begin{align*}
    \Delta(\p_\alpha) = \sum_{\substack{(\alpha_*)=\alpha\\ \ind_i(\alpha_*)=[r_i],  {i\in\mathbb N}}} \Delta(K_{\alpha_*}) = \sum_{\substack{(\alpha_*)=\alpha\\ \ind_i(\alpha_*)=[r_i],  {i\in\mathbb N}}} \sum_{\alpha_*=\beta_*\cdot\gamma_*}K_{\beta_*}\otimes K_{\gamma_*},
\end{align*}
where the second equality is by Proposition~\ref{prop:Kcop}. Therefore,
$$\Delta(\p_\alpha)=\sum_{\alpha=\beta\cdot\gamma}\sum_{ {(T_1, T_2, \ldots )\in \prod_{i} {[r_i]\choose t_i}}} \sum_{\substack{(\beta_*)=\beta,\,(\gamma_*)=\gamma\\ \ind_i(\beta_*)=T_i,\,\ind_i(\gamma_*)=[r_i]\setminus T_i,  {i\in\mathbb N}}} K_{\beta_*}\otimes K_{\gamma_*} = \sum_{\alpha=\beta\cdot\gamma} \frac{z_\alpha}{z_{\beta}z_\gamma} \p_\beta\otimes \p_\gamma,$$
where $t_i$ denotes the number of $i$'s appearing in $\beta$. The second equality is because $\frac{z_\alpha}{z_\beta z_\gamma} = \prod_i{r_i\choose t_i}$ when $\alpha=\beta\cdot\gamma$, as noted in \cite[\tcr{Notation 3.15}]{BDHMN}.
\end{proof}

The effects of the involutions $\psi$, $\rho$ and $\omega$ on our basis can be deduced from our work studying these involutions on weighted $P$-partition generating functions.

\begin{theorem}\label{the:pinv}
Let $\alpha$ be a composition of $n$. Then
\begin{enumerate}
    \item 
    $\psi(\p_\alpha)= (-1)^{n-\ell(\alpha)} \p_\alpha^r$,
    \item 
    $\rho(\p_\alpha)=\p^r_{\alpha^r}$,
    \item 
    $\omega(\p_\alpha)=(-1)^{n-\ell(\alpha)}\p_{\alpha^r}.$
\end{enumerate}
\end{theorem}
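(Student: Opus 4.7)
The plan is straightforward: each of $\p_\alpha$ and $\p^r_\alpha$ is by definition a sum of chain $P$-partition generating functions $K_{(s,\gamma,w)}$ or $K_{(s,\gamma^*,w)}$, so I would apply Lemma~\ref{lem:chaininv} termwise to every summand and then reindex.

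For (1), I expand $\p_\alpha = \sum_s K_{(s,\gamma,w)}$ over all linear extensions $s$ of $P^{\widetilde\alpha}$ with $\alpha(s,w) = \alpha$. Applying $\psi$ termwise, Lemma~\ref{lem:chaininv}(1) gives $\psi(K_{(s,\gamma,w)}) = (-1)^{w(s)-|s|}K_{(s,\gamma^*,w)}$. For any such $s$ we have $w(s) = |\alpha| = n$ and $|s| = \ell(\alpha)$, so the sign $(-1)^{n-\ell(\alpha)}$ is constant and factors out of the sum, leaving $\sum_s K_{(s,\gamma^*,w)}$, which is exactly $\p^r_\alpha$ by Definition~\ref{def:rcpow}.

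For (2) and (3), the extra observation I would use is that since $P^{\widetilde\alpha}$ is an antichain, its linear extensions are precisely the chains on its underlying set. Consequently, the dualization $s \mapsto s^*$ is an involution on the set of linear extensions of $P^{\widetilde\alpha}$; moreover, reversing the order of the chain reverses the sequence of weights, so it restricts to a bijection between linear extensions with $\alpha(s,w) = \alpha$ and those with $\alpha(s,w) = \alpha^r$. Applying Lemma~\ref{lem:chaininv}(2) termwise gives $\rho(\p_\alpha) = \sum_s K_{(s^*,\gamma^*,w)}$, and reindexing along $s \mapsto s' := s^*$ identifies this with $\sum_{s'}K_{(s',\gamma^*,w)}$ summed over linear extensions $s'$ with $\alpha(s',w) = \alpha^r$, which is $\p^r_{\alpha^r}$ by Definition~\ref{def:rcpow}. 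For (3), Lemma~\ref{lem:chaininv}(3) produces an additional constant sign $(-1)^{w(s)-|s|} = (-1)^{n-\ell(\alpha)}$ that pulls out of the sum exactly as in (1), and the same reindexing as in (2) then yields $(-1)^{n-\ell(\alpha)}\p_{\alpha^r}$.

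The argument is essentially mechanical once Lemma~\ref{lem:chaininv} is in hand, so there is no substantial obstacle; the only points requiring care are verifying that the sign $(-1)^{w(s)-|s|}$ is the same constant $(-1)^{n-\ell(\alpha)}$ for every term in the sum (so it may be extracted), and checking that the chain-dualization $s \mapsto s^*$ is a genuine bijection reindexing the sums in (2) and (3)—both of which are immediate from the antichain structure of $P^{\widetilde\alpha}$ and the identities $\alpha(s^*,w) = \alpha(s,w)^r$ and $|\alpha(s,w)| = w(s)$, $\ell(\alpha(s,w)) = |s|$.
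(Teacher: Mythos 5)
Your proposal is correct and follows exactly the paper's argument: apply Lemma~\ref{lem:chaininv} termwise to the expansion of $\p_\alpha$ over linear extensions of $P^{\widetilde\alpha}$, extract the constant sign $(-1)^{n-\ell(\alpha)}$, and for (2) and (3) reindex via $s\mapsto s^*$ using $\alpha(s^*,w)=\alpha(s,w)^r$. The paper leaves the details of (2) and (3) as ``similar''; you have simply spelled them out.
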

\tcr{\begin{example}\label{ex:mapps}
$$\psi (\p _{112}) = - \p_{112}^r \qquad \rho (\p _{112}) = \p _{211} ^r \qquad \omega (\p _{112}) = - \p _{211}$$
\end{example}}
\begin{proof}
We compute by Lemma~\ref{lem:chaininv} that
$$\psi(\p_\alpha)= \sum_{\alpha(s,w)=\alpha} \psi(K_{(s,\gamma,w)})=(-1)^{n-\ell(\alpha)}\sum_{\alpha(s,w)=\alpha}K_{{(s,\gamma^*,w)}}=(-1)^{n-\ell(\alpha)} \p_\alpha^r,$$
where the sum is over all linear extensions $s$ of $P^{\widetilde\alpha}$ satisfying $\alpha(s,w)=\alpha$, proving (1). The proofs of (2) and (3) are similar, noting that $\alpha(s^*,w)=\alpha(s,w)^r$ for any finite chain $s$ and any weight function $w:s\to\mathbb N$.
\end{proof}
{We illustrate Theorem~\ref{the:pinv} with the following diagram when $\alpha$ is a composition of $n$.}
\vspace{2pt}
\begin{center}
\vspace{5mm}
\begin{tikzcd}
\p_{\alpha} \arrow[dd, "\rho", mapsto] \arrow[rrrr, "\psi", mapsto]\arrow[rrrrdd, "\omega", mapsto] & && & (-1)^{n-\ell(\alpha)}\p_{\alpha}^r\arrow[dd, "\rho", mapsto] \\ \\
\p^r_{\alpha^r}\arrow[rrrr, "\psi", mapsto]& &&& (-1)^{n-\ell(\alpha)}\p_{\alpha^r}
\end{tikzcd}
\vspace{5mm}
\end{center}

The last two theorems of this section describe combinatorially the coefficients of our basis when expanded into the \tcr{monomial and fundamental} bases of $\QSym$.

\begin{theorem}\label{the:ptomqsym}
Let $\alpha$ be a composition of $n$. Then
$$\p_\alpha = \sum_{\beta\vDash n}\mathcal R_{\alpha\beta}M_\beta,$$
where $\mathcal{R} _{\alpha \beta}$ is the number of $\ell (\beta)\times \ell(\alpha)$ matrices $(m_{ij})$ whose nonzero entries are the parts of $\alpha$ such that
\begin{enumerate}
\item $\sum _{j=1} ^{\ell(\alpha)} m_{ij} = \beta _i$, that is, the entries of row $i$ sum to $\beta _i$,
\item the only nonzero entry in column $j$ is $\widetilde{\alpha} _j$,
\item the nonzero entries give $\alpha$ when read left to right and top to bottom.\end{enumerate}
\end{theorem}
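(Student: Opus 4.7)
The plan is to combine Lemma~\ref{lem:chaintoM} with the definition of $\p_\alpha$. Writing
$$\p_\alpha=\sum_{\substack{(\alpha_*)=\alpha\\ \ind_i(\alpha_*)=[r_i]}} K_{\alpha_*}=\sum_{\substack{(\alpha_*)=\alpha\\ \ind_i(\alpha_*)=[r_i]}}\sum_{\alpha\preccurlyeq\beta\preccurlyeq\delta(s_{\alpha_*},\gamma,w)} M_\beta,$$
where $s_{\alpha_*}$ denotes the chain whose elements are listed in order by $\alpha_*$, this reduces the theorem to exhibiting a bijection between matrices counted by $\mathcal R_{\alpha\beta}$ and labelled compositions $\alpha_*$ satisfying $(\alpha_*)=\alpha$, $\ind_i(\alpha_*)=[r_i]$ for every $i$, and $\beta\preccurlyeq\delta(s_{\alpha_*},\gamma,w)$.

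Next I would translate the condition $\beta\preccurlyeq\delta(s_{\alpha_*},\gamma,w)$ into combinatorics on labels. By the definition of $\delta$, this asserts that every descent of $\gamma$ in $s_{\alpha_*}$, that is, every $k$ with $\gamma(u_k)>\gamma(u_{k+1})$ in $\mathbb N_{\mathbb N}^*$, occurs at a cut of $\beta$. Decomposing $\alpha_*=\sigma_*^1\cdots\sigma_*^{\ell(\beta)}$ according to the refinement $\alpha\preccurlyeq\beta$, this is equivalent to each block $\sigma_*^i$ being strictly ascending in $\mathbb N_{\mathbb N}^*$. Unpacking the definition of $\mathbb N_{\mathbb N}^*$, this means the underlying composition $\sigma_i$ is weakly decreasing in value and, within each group of parts sharing a common value, the labels strictly decrease as one reads left to right.

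The bijection is then as follows. For each value $v$, label the $r_v$ columns of the matrix with $\widetilde\alpha_j=v$, from left to right, by $v_{r_v},v_{r_v-1},\dots,v_1$. Given a pair $(\alpha_*,\beta)$ as above, place each labelled part $v_l$ of $\alpha_*$ in the row indexed by the block of $\beta$ to which it belongs and in the column whose label matches. The row sums then give $\beta$, each column contains exactly one nonzero entry equal to $\widetilde\alpha_j$, and reading the matrix left to right, top to bottom recovers $\alpha_*$ (hence $\alpha$), because within each row our column convention forces the labels to appear in strictly decreasing order within each value block and in order of decreasing value across blocks---exactly the structure imposed on $\sigma_*^i$. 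Reversing these steps (reading rows to obtain the decomposition and applying the inverse column-to-label correspondence) yields the inverse map.

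The main obstacle is keeping track of the interplay between the dual ordering of $\mathbb N_{\mathbb N}^*$, in which smaller labels of a common value are ``larger'' for $\gamma$, and the natural left-to-right ordering of columns. The column labelling convention $v_{r_v},\dots,v_1$ is chosen precisely so that the strict decrease of labels in each block of $\alpha_*$ corresponds to the natural column order inside each row of the matrix, allowing the row reading to recover the sub-composition $\sigma_i$ of $\alpha$ exactly and ensuring condition (3).
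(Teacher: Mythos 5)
Your proof is correct and takes essentially the same route as the paper's: both expand $\p_\alpha$ via Definition~\ref{def:cpow} and Lemma~\ref{lem:chaintoM}, then identify each pair consisting of a linear extension and a coarsening $\beta$ with a matrix by labelling the columns carrying the value $v$ as $v_{r_v},\dots,v_1$ from left to right. You merely spell out more explicitly the translation of the condition $\beta\preccurlyeq\delta(s,\gamma,w)$ into the within-row structure of the matrix, a point the paper's proof treats tersely.
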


\begin{example}\label{ex:ptomqsym}
$$\mathcal{p}_{121} = 2M _{121} + 2M_{13}$$from the following.
$$\begin{pmatrix}
&1&\\
2&&\\
&&1
\end{pmatrix}\quad
\begin{pmatrix}
&&1\\2&&\\&1&
\end{pmatrix}\quad
\begin{pmatrix}
&1&\\2&&1
\end{pmatrix}\quad
\begin{pmatrix}
&&1\\2&1&
\end{pmatrix}
$$
\end{example}

\begin{proof}
We will prove this by exhibiting a one-to-one correspondence between each matrix $(m_{ij})$ and each term $M_\beta$ that we obtain when we compute $\p_\alpha$.

By Definition~\ref{def:cpow} and Lemma~\ref{lem:chaintoM},
$$\p_\alpha = \sum_{\alpha(s,w)=\alpha}\sum_{\alpha(s,w)\preccurlyeq\beta\preccurlyeq\delta(s,\gamma,w)}M_\beta.$$
Let $\widetilde\alpha=n^{r_n}\cdots 1^{r_1}$. The first $r_n$ columns of $(m_{ij})$ will correspond to $n_{r_n}, \ldots , n_{1}$ and each column will contain one $n$ representing that weight, the next $r_{n-1}$ columns of $(m_{ij})$ will correspond to $(n-1)_{r_{n-1}}, \ldots , (n-1)_1$ and each column will contain one $n-1$ representing that weight, $\ldots\ $, the last $r_1$ columns of $(m_{ij})$ will correspond to $1_{r_1}, \ldots , 1_1$ and each column will contain one $1$ representing that weight.
(The other entries in each column are zero.) 

To construct our correspondence, note that  {(by property (3))} the reading word of $(m_{ij})$ gives a linear extension $s$ of $P^{\widetilde\alpha}$ satisfying $\alpha(s,w)=\alpha$. Moreover, the ordering of our columns guarantees that two elements of $P^{\widetilde\alpha}$ are in the same row of $(m_{ij})$ only if the labelling $\gamma$ preserves their relative order in $s$. Placing the $k$th $a$ from the right in row $i$ then corresponds to weight $a_k$ contributing towards part $\beta_i$ in $M_\beta$.
\end{proof}

\begin{remark}
Observe that Theorem~\ref{the:ptomqsym} naturally refines Proposition~\ref{prop:ptomsym} and we can deduce the proposition as a corollary via the following. Note if $\lambda$ is a partition of $n$ and
$$p_\lambda = \sum _{\mu\vdash n} R _{\lambda \mu} m _\mu = \sum _{\mu\vdash n} R _{\lambda \mu} \left(\sum _{\tilde{\beta} = \mu} M_\beta \right)$$
for some coefficients $R_{\lambda\mu}$, then to enumerate how many $m_\mu$ we have we need only choose a representative $\beta$ such that $\tilde{\beta} = \mu$ to represent $\sum _{\tilde{\beta} = \mu} M_\alpha$. So choose ${{\beta} = \mu}$. We can compute the coefficient of $M_\mu$ in the expansion of $p_\lambda$ into the monomial basis of $\QSym$ by applying Theorems~\ref{the:symptoqsymp} and \ref{the:ptomqsym}. Note that since there is no restriction given by a linear extension, we no longer need the third condition of Theorem~\ref{the:ptomqsym}. Thus $R _{\lambda \mu}$ is the number of $\ell (\mu)\times \ell(\lambda)$ matrices $(m_{ij})$ whose nonzero entries are the parts of $\lambda$ such that
\begin{enumerate}
\item $\sum _{j=1} ^{\ell(\lambda)} m_{ij} = \mu _i$, that is, the entries of row $i$ sum to $\mu _i$,
\item the only nonzero entry in column $j$ is $\lambda _j$.
\end{enumerate}
\end{remark}

\VW{
We find an explicit formula for the coefficients $\mathcal R_{\alpha\beta}$ in the following proposition, which shows that all nonzero coefficients in the expansion of the combinatorial power sums in the \svw{monomial} basis may be written as a product of multinomial coefficients.
\begin{proposition} Let $\alpha\preccurlyeq\beta$ be compositions of $n$. Let $s_a$ denote the number of parts of size $a$ in $\beta$ and $t_a^{(i)}$ the number of parts of size $a$ in $\alpha^{(i)}$ for $1\le a\le n$ and $1\le i \le \ell(\beta)$. Then
$$\mathcal R_{\alpha\beta} = \left(\prod_{a=1}^n{s_a\choose t_a^{(1)},\dots, t_a^{(\ell(\beta))}}\right)\left(\prod_{i=1}^{\ell(\beta)} \delta_{\widetilde{\alpha^{(i)}} \alpha^{(i)}}\right).$$
\end{proposition}}

\begin{proof}
\VW{Any matrix counted by $\mathcal R_{\alpha\beta}$ must have its nonzero entries in row $i$ form, in order, the composition $\alpha^{(i)}$. Since the only nonzero entry in column $j$ is $\widetilde\alpha_j$, the composition formed by the nonzero entries in row $i$ is a partition. Hence $\mathcal R_{\alpha\beta}=0$ whenever some $\alpha^{(i)}$ is not a partition.}

\VW{Otherwise, if each $\alpha^{(i)}$ is a partition, then each matrix counted by $\mathcal R_{\alpha\beta}$ is obtained by making a choice for each $1\le a\le n$ of how to partition \svw{$s_a$} $a$'s among the rows of the matrix $(m_{ij})$ so that each row $i$ has $t_a^{(i)}$ $a$'s. In this case we then have
$$\mathcal R_{\alpha\beta} = \prod_{a=1}^n{\svw{s_a}\choose t_a^{(1)},\dots, t_a^{(\ell(\beta))}}.$$}
\end{proof}

\begin{theorem}\label{the:ptofqsym}
Let $\alpha$ be a composition of $n$. Then
$$\p_\alpha=\sum_{\beta\vDash n}(-1)^{\ell(\beta)-\ell(\alpha\vee\beta)}\mathcal Q_{\alpha(\alpha\vee\beta)} F_\beta,$$
where  {$\mathcal{Q} _{\alpha \xi}$} is the number of $ {\ell (\xi)}\times \ell(\alpha)$ matrices $(m_{ij})$ whose nonzero entries are the parts of $\alpha$ such that

\begin{enumerate}
\item $\sum _{j=1} ^{\ell(\alpha)} m_{ij} =  {\xi _i}$, that is, the entries of row $i$ sum to $ {\xi _i}$,
\item the only nonzero entry in column $j$ is $\widetilde{\alpha} _j$,
\item the nonzero entries give $\alpha$ when read left to right and top to bottom,
\item the rightmost nonzero entry of row $i$ is right of the leftmost nonzero entry of row $i+1$.
\end{enumerate}
\end{theorem}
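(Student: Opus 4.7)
The plan is to apply Lemma~\ref{lem:chaintoF} to the summands of the decomposition $\p_\alpha=\sum_s K_{(s,\gamma,w)}$ given by Definition~\ref{def:cpow}, and then reinterpret the resulting signed count by adapting the matrix bijection underlying Theorem~\ref{the:ptomqsym}. After expanding and exchanging the order of summation, the coefficient of $F_\beta$ in $\p_\alpha$ becomes
\[\sum_{s}(-1)^{\ell(\beta)-\ell(\delta(s,\gamma,w))},\]
where the sum ranges over linear extensions $s$ of $P^{\widetilde\alpha}$ with $\alpha(s,w)=\alpha$ satisfying $\delta(s,\gamma^*,w)^c\preccurlyeq\beta\preccurlyeq\delta(s,\gamma,w)$.

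The key observation is that the injectivity of $\gamma$ yields the disjoint union $\set(\alpha)=\set(\delta(s,\gamma,w))\sqcup\set(\delta(s,\gamma^*,w))$, since at each position exactly one of $\gamma(u_k)>\gamma(u_{k+1})$ or $\gamma(u_k)<\gamma(u_{k+1})$ holds in $\mathbb N_{\mathbb N}^*$. Translating the two inclusions into subset containments on $[n-1]$, this disjointness forces $\set(\delta(s,\gamma,w))=\set(\alpha)\cap\set(\beta)=\set(\alpha\vee\beta)$, so every contributing $s$ must satisfy $\delta(s,\gamma,w)=\alpha\vee\beta$. The sign factor is therefore the constant $(-1)^{\ell(\beta)-\ell(\alpha\vee\beta)}$, which can be pulled outside the sum.

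Setting $\mu=\alpha\vee\beta$, I next aim to count linear extensions $s$ with $\alpha(s,w)=\alpha$ and $\delta(s,\gamma,w)=\mu$, and to show this count equals $\mathcal Q_{\alpha\mu}$. The bijection used in the proof of Theorem~\ref{the:ptomqsym} identifies matrices $(m_{ij})$ of shape $\ell(\mu)\times\ell(\alpha)$ satisfying conditions~(1)--(3) with such linear extensions $s$ whose row-sum composition $\mu$ satisfies $\alpha\preccurlyeq\mu\preccurlyeq\delta(s,\gamma,w)$. Since the columns of these matrices are arranged left-to-right in ascending order under $\mathbb N_{\mathbb N}^*$, condition~(4) asserts that at each row boundary the last nonzero entry $u_k$ of row $i$ lies in a column greater in $\mathbb N_{\mathbb N}^*$ than the first nonzero entry $u_{k+1}$ of row $i+1$, i.e.\ $\gamma(u_k)>\gamma(u_{k+1})$, making $k$ a $\gamma$-descent of $s$. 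This enforces $\set(\mu)\subseteq\set(\delta(s,\gamma,w))$, which combined with the reverse inclusion yields $\delta(s,\gamma,w)=\mu$, finishing the count.

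The main obstacle will be the careful translation of condition~(4) into the equality $\delta(s,\gamma,w)=\mu$, which requires precisely tracking the ordering of matrix columns with respect to $\mathbb N_{\mathbb N}^*$ and relating column positions of consecutive matrix entries to the $\gamma$-descents of the associated linear extension.
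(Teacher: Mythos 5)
Your proof is correct, but it takes a genuinely different route from the paper's. The paper starts from the monomial expansion of Theorem~\ref{the:ptomqsym}, applies the transition $M_\gamma=\sum_{\beta\preccurlyeq\gamma}(-1)^{\ell(\beta)-\ell(\gamma)}F_\beta$, and reduces the theorem to the identity $\mathcal Q_{\alpha(\alpha\vee\beta)}=\sum_{\gamma\succcurlyeq\alpha\vee\beta}(-1)^{\ell(\alpha\vee\beta)-\ell(\gamma)}\mathcal R_{\alpha\gamma}$, which it then proves by a second inclusion--exclusion carried out on the matrices themselves. You instead apply Lemma~\ref{lem:chaintoF} directly to each chain in Definition~\ref{def:cpow} and exploit the disjoint decomposition $\set(\alpha)=\set(\delta(s,\gamma,w))\dotcup\set(\delta(s,\gamma^*,w))$ to show that the two inclusions $\delta(s,\gamma^*,w)^c\preccurlyeq\beta\preccurlyeq\delta(s,\gamma,w)$ force $\set(\delta(s,\gamma,w))=\set(\alpha)\cap\set(\beta)$, so every contributing linear extension has $\delta(s,\gamma,w)=\alpha\vee\beta$, the sign is constant, and the coefficient becomes a genuine (unsigned) count of linear extensions with prescribed descent composition, matched to $\mathcal Q_{\alpha(\alpha\vee\beta)}$ via the reading-word correspondence. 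Your route buys a sharper combinatorial statement --- $[F_\beta]\p_\alpha$ is, up to sign, the number of linear extensions $s$ of $P^{\widetilde\alpha}$ with $\alpha(s,w)=\alpha$ and $\delta(s,\gamma,w)=\alpha\vee\beta$ --- and confines all inclusion--exclusion to the already-proved Lemma~\ref{lem:chaintoF}; the paper's route buys economy by reusing Theorem~\ref{the:ptomqsym} wholesale without re-examining descent structure. The one delicate step is your translation of condition (4), and you have it right: since the columns ascend left to right in $\mathbb N_{\mathbb N}^*$, conditions (1)--(3) force $\gamma$-ascents within rows, giving $\set(\delta(s,\gamma,w))\subseteq\set(\mu)$, while condition (4) forces a $\gamma$-descent at every row boundary, giving the reverse inclusion and hence $\delta(s,\gamma,w)=\mu$.
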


\begin{example}
$$\p_{121}=-2F_{112}+2F_{13}$$
from the following.
$$\begin{pmatrix}
&1&\\2&&1
\end{pmatrix}\quad
\begin{pmatrix}
&&1\\2&1&
\end{pmatrix}
$$
\end{example}
\begin{proof}
We have by Theorem~\ref{the:ptomqsym} and the transition \tcr{matrix} between the monomial and \tcr{fundamental} quasisymmetric functions that
\begin{align*}
    \p_\alpha &= \sum_{\gamma\vDash n}\mathcal R_{\alpha\gamma}M_\gamma = \sum_{\gamma\succcurlyeq \alpha} \mathcal R_{\alpha\gamma}M_\gamma\\
    &= \sum_{\gamma\succcurlyeq\alpha} \mathcal R_{\alpha\gamma}\left(\sum_{\beta\preccurlyeq\gamma} (-1)^{\ell(\beta)-\ell(\gamma)}F_\beta\right)\\
    &=\sum_{\beta\vDash n} \left(\sum_{\gamma\succcurlyeq \alpha\vee\beta}(-1)^{\ell(\beta)-\ell(\gamma)} \mathcal R_{\alpha\gamma}\right) F_\beta.
\end{align*}
Therefore, it remains to show the identity
$$\mathcal Q_{\alpha(\alpha\vee\beta)} = \sum_{\gamma\succcurlyeq \alpha\vee\beta} (-1)^{\ell(\alpha\vee\beta)-\ell(\gamma)}\mathcal R_{\alpha\gamma}.$$

Note for $\gamma\succcurlyeq \alpha\vee\beta$ that the matrices counted by $\mathcal R_{\alpha\gamma}$ are naturally in bijection with the matrices counted by $\mathcal R_{\alpha(\alpha\vee\beta)}$ satisfying the additional property that the rightmost nonzero entry of row $i$ is  {left} of the leftmost nonzero entry of row $i+1$ whenever $(\alpha\vee\beta)_i$ and $(\alpha\vee\beta)_{i+1}$ correspond to the same part of $\gamma$ in the coarsening $\gamma\succcurlyeq \alpha\vee\beta$. The desired identity then follows from the definition of $\mathcal Q_{\alpha(\alpha\vee\beta)}$ and applying the principle of inclusion-exclusion.
\end{proof}

\begin{remark}\label{rem:dualchain}
The analogous theorems for \tcr{the reverse} combinatorial power sums $\{\p_\alpha^r\}_{\alpha\vDash n\ge 0}$ are proved by the same arguments. In particular, they \tcr{form a basis for $\QSym$ and} naturally refine the power sum symmetric functions via 
\tcr{$$p_\lambda= \sum_{\widetilde\alpha=\lambda}\p^r_\alpha,$$} multiply via the product rule 
\tcr{$$\p_\alpha^r \p_\beta^r = \frac{z_\alpha z_\beta}{z_{\alpha\cdot\beta}}\sum_{\gamma\in\alpha\shuffle\beta}\p_\gamma^r$$} and comultiply via the coproduct rule 
\tcr{$$\Delta(\p_\alpha^r)=\sum_{\alpha=\beta\cdot\gamma}\frac{z_\alpha}{z_\beta z_\gamma}\p_\beta^r\otimes\p_\gamma^r.$$} Moreover, when $\alpha\vDash n$ we have 
\tcr{$$\psi(\p_\alpha^r)=(-1)^{n-\ell(\alpha)}\p_\alpha,\qquad \rho(\p_\alpha^r)=\p_{\alpha^r},\qquad \omega(\p_{\alpha}^r)=(-1)^{n-\ell(\alpha)}\p_{\alpha^r}^r$$} and $\p_\alpha^r$ expands into the monomial and fundamental bases via 
\tcr{$$\p_\alpha^r=\sum_{\beta\vDash n}\mathcal R_{\alpha^r\beta^r}M_\beta = \sum_{\beta\vDash n}(-1)^{\ell(\beta)-\ell(\alpha\vee\beta)}\mathcal Q_{\alpha^r(\alpha^r\vee\beta^r)}F_\beta.$$}

One advantage of the basis of combinatorial power sums over the basis of reverse combinatorial power sums is that, e.g. by Theorem~\ref{the:ptomqsym}, $[M_{|\alpha|}]\p_{\alpha}=\delta_{\widetilde\alpha\alpha}$, although we similarly have $[M_{|\alpha|}]\p^r_\alpha = \delta_{\widetilde\alpha \alpha^r}$. That is, if $\alpha$ is a composition of $n$, the coefficient of $M_n$ in the monomial expansion of $\p_\alpha$ is $1$ if $\alpha$ is a partition, and $0$ otherwise.
\end{remark}
\section{Acknowledgements}\label{sec:ack} The authors would like to thank  {the referees,} Per Alexandersson and Bruce Sagan  for helpful suggestions and comments.

\bibliographystyle{plain}

\end{document}